 \def\@textbottom{\vskip \z@ \@plus 8pt}
 \let\@texttop\relax
\providecommand{\U}[1]{\protect\rule{.1in}{.1in}}
\numberwithin{equation}{section}
\definecolor{myred}{rgb}{0.75,0,0}
\definecolor{mygreen}{rgb}{0,0.5,0}
\definecolor{myblue}{rgb}{0,0,0.65}
\newcommand\tsqbinom[2]{\genfrac{[}{]}{0pt}{1}{#1}{#2}}
\def\ind{\boldsymbol 1}
\def\lamb{{\boldsymbol{\lambda}}}
\def\mub{{\boldsymbol{\mu}}}
\def\Spr{{\mathrm{Spr}}}
\def\BG{{\mathfrak B}}
\def\CM{{\mathbb{C}}}
\def\EM{{\mathbb{E}}}
\def\FM{{\mathbb{F}}}
\def\gg{{\mathfrak g}}
\def\KM{{\mathbb{K}}}
\def\NG{{\mathfrak N}}
\def\NM{{\mathbb{N}}}
\def\OM{{\mathbb{O}}}
\def\PG{{\mathfrak P}}
\def\QM{{\mathbb{Q}}}
\def\SG{{\mathfrak S}}
\def\BC{{\mathcal{B}}}
\def\DB{{\mathbf D}}
\def\LC{{\mathcal{L}}}
\def\NC{{\mathcal{N}}}
\def\OC{{\mathcal{O}}}
\def\PC{{\mathcal{P}}}
\def\SB{{\mathbf S}}
\def\LS{{\EuScript L}}
\newcommand{\nc}{\newcommand}
\newcommand{\renc}{\renewcommand}
\newcommand{\rdots}{\mathinner{ \mkern1mu\raise1pt\hbox{.}
\mkern2mu\raise4pt\hbox{.}
\mkern2mu\raise7pt\vbox{\kern7pt\hbox{.}}\mkern1mu}}
\def\co{{\mathrm{co}}}
\def\triv{{\mathrm{triv}}}
\DeclareMathOperator{\Irr}{Irr}
\DeclareMathOperator{\Part}{{\mathbf{Part}}}
\DeclareMathOperator{\Bipart}{{\mathbf{Bip}}}
\DeclareMathOperator{\Unbipart}{\overline{\Bipart}}
\DeclareMathOperator{\Dpart}{\widetilde{\Bipart}}
\DeclareMathOperator{\Symb}{{\mathbf{Symb}}}
\DeclareMathOperator{\DSymb}{{\mathbf{DSymb}}}
\DeclareMathOperator{\Sgn}{\varepsilon}
\def\Sp{{\mathbf{Sp}}}
\def\SO{{\mathbf{SO}}}
\def\GL{{\mathbf{GL}}}
\def\pr{{\mathrm{pr}}}
\def\ov{\overline}
\def\to{\rightarrow}
\nc{\triright}{\stackrel{[1]}{\to}}
\nc{\Br}{\mathcal{B}}
\nc{\HotRR}{{}_R\mathcal{K}_R}
\nc{\HotR}{\mathcal{K}_R}
\nc{\excise}[1]{}
\nc{\defect}{\text{df}}
\nc{\h}[1]{\underline{H}_{#1}}
\nc{\Ga}{\mathbb{G}_a}
\nc{\Gm}{\mathbb{G}_m}
\nc{\Perv}{{\mathbf{P}}}
\nc{\IH}{{\mathrm{IH}}}
\nc{\ic}{\mathbf{IC}}
\nc{\gl}{{\mathfrak{gl}}}
\renc{\sl}{{\mathfrak{sl}}}
\renc{\sp}{{\mathfrak{sp}}}
\DeclareMathOperator{\im}{{\mathrm{Im}}}
\DeclareMathOperator{\Res}{Res}
\newtheorem{theorem}{Theorem}[section]
\newtheorem{lemma}[theorem]{Lemma}
\newtheorem{proposition}[theorem]{Proposition}
\newtheorem{corollary}[theorem]{Corollary}
\newtheorem{example}[theorem]{Example}
\theoremstyle{definition}
\newtheorem{definition}[theorem]{Definition}
\theoremstyle{remark}
\newtheorem{remark}[theorem]{Remark}
\def\bfl{{\boldsymbol \lambda}}
\def\bfm{{\boldsymbol \mu}}
\DeclareMathOperator{\dist}{dist}
\begin{document}

\begin{abstract}
We define the notion of basic set data for finite groups (building on the
notion of basic set, but including an order on the irreducible characters as part of the structure), and we prove that the Springer correspondence provides basic set data for Weyl groups. Then we use this to determine explicitly the modular Springer correspondence for classical types (defined over a base field of odd characteristic $p$, and with coefficients in a field of odd characteristic $\ell\neq p$): the modular case is obtained as a restriction of the ordinary case to a basic set. In order to do so, we compare the order on bipartitions introduced by Dipper and James with the order induced by the Springer correspondence. We also provide a quicker proof, by sorting characters according to the dimension of the corresponding Springer fiber, an invariant which is directly computable from symbols.
\end{abstract}

\title[Modular Springer correspondence for classical types]{Springer basic sets and modular Springer correspondence for 
classical types}

\author{Daniel Juteau}
\address{CNRS UMR 7586, Insitut de Math\'ematques de Jussieu--Paris Rive Gauche, Universit\'e de Paris, B\^atiment Sophie Germain, 8 Place Aur\'elie Nemours, 75013 Paris, France}
\email{daniel.juteau@cnrs.fr}
\urladdr{https://webusers.imj-prg.fr/~daniel.juteau/}

\author{C\'edric Lecouvey}
\address{CNRS UMR 7013, Institut Denis Poisson, UFR Sciences et Techniques, Universit\'e Fran\c cois Rabelais, Parc de
Grandmont, 37200 Tours, France }
\email{cedric.lecouvey@lmpt.univ-tours.fr}
\urladdr{http://www.lmpt.univ-tours.fr/~lecouvey/}

\author{Karine Sorlin}
\address{
CNRS UMR 7352,
Laboratoire Ami\'enois de Math\'ematique Fondamentale et Appliqu\'ee,
UFR des Sciences,
33, rue Saint-Leu,
80039 Amiens Cedex 1,
France }
\email{karine.sorlin@u-picardie.fr}
\urladdr{http://www.lamfa.u-picardie.fr/sorlin/}

\thanks{This research was partially supported by the ANR projects REPRED (ANR-09-JCJC-0102-01) and GEREPMOD (ANR-16-CE40- 0010-01). The first author is gratedul for the support of the Charles Simonyi Endowment at the Institute for Advanced Study, where the last stages of the writing were accomplished.}

\maketitle

\begin{center}
\emph{To the memory of Tonny Springer.}
\end{center}

\section{Introduction}

\subsection{Context}

Let $G$ be a connected reductive algebraic group defined over $\ov\FM_{p}$, and let $W$ be its Weyl group. Assume $p$ is good. If $\OC$ is a nilpotent orbit, we denote by $\BC_\OC$ the fibre of the Springer resolution of the nilpotent cone \cite{Springer:unip} at an element $x\in\OC$. In the landmark paper \cite{Springer:Trig}, Springer defined an action of $W$ on the $\ell$-adic cohomology of $\BC_\OC$, and a correspondence which associates to each irreducible representation of $W$ a pair consisting of a nilpotent orbit and a $G$-equivariant $\ell$-adic local system on that orbit.

The Springer correspondence is an injection rather than a bijection: some pairs are missing; it was later generalized by Lusztig, to include all pairs \cite{Lusztig:ICC}. Actually he works in the group case (with unipotent
classes rather than nilpotent orbits), and for arbitrary $p$. Note that when $p$ is good, the classification of unipotent classes or nilpotent orbits is independent of $p$, and we have a $G$-equivariant Springer homeomorphism from the unipotent variety to the nilpotent cone \cite{Springer:unip}; for those $p$, the Springer correspondence is independent of $p$.

The non-generalized correspondence (for the group) was explicitly determined
in all types \cite{Springer:Trig, Hotta-Springer, Shoji:classical-Springer,
Shoji:F4-Springer, Alvis-Lusztig}, and also the generalized correspondence
\cite{Lusztig:ICC, Lusztig-Spaltenstein:classical-Springer,
Spaltenstein:exceptional-Springer,Lusztig:CS,Lusztig:GSC} (there is one last ambiguity in type $E_8$ for $p = 3$). For $p$ bad, besides the group
case one can consider the Lie algebra or its dual, and all three situations are different \cite{Xue}.

The Springer correspondence and its generalization are closely related to Green functions. The orthogonality relations for 
generalized Green functions led to an algorithm, due to Lusztig and Shoji, to compute the stalks of the intersection 
cohomology complexes of all nilpotent orbits, with coefficients in any equivariant local system \cite[\S 24]{Lusztig:CS}.

In \cite{Juteau:thesis}, a modular Springer correspondence was defined,
following the Fourier-Deligne transform approach of \cite{Brylinski}. That is, one
considers irreducible representations of the Weyl groups over a field
$\FM$ of characteristic $\ell\neq p$, and local systems of vector spaces
over $\FM$. Besides, one can see the decomposition matrix of a Weyl
group as a submatrix of a decomposition matrix for equivariant perverse
sheaves on the nilpotent cone defined in \cite{Juteau:decperv}. The modular Springer correspondence was explicitly determined in \cite{Juteau:thesis} in
type $A_{n}$, and also in other types of rank at most three. These results are recalled in \cite{Juteau:modspringer}, together with the explicit determination of the modular Springer correspondence for exceptional Weyl groups. See also \cite{AHJR:Weyl} for a construction of the modular Springer correspondence for complex reductive Le algebras, and \cite{AHJR1,AHJR2,AHJR3} for the generalized case.

In the present paper, we determine the modular Springer correspondence for
classical types when $\ell\neq 2$. Since the case $\ell = 2$ (generalized) was solved in \cite[Corollary 9.8]{AHJR2} (for $\gg$ over $\CM$ rather than $\ov\FM_p$ but it does not really matter), and the exceptional cases were done in \cite{Juteau:modspringer}, the modular Springer correspondence is therefore completely determined in all types (for $p$ good, but for any $\ell \neq p$). As for the combinatorics of the generalized modular Springer correspondence for classical Lie algebras in the case $\ell\neq 2$, the implicit conjectures in \cite[\S 7--8]{AHJR2} (i.e.~in each case, the combinatorial bijection which is used to count cuspidals should be the generalized Springer correspondence) remain to be proved. 

\subsection{A diagram}
\label{subsec:diag}
Our strategy is to use the known results in characteristic zero \cite{Shoji:classical-Springer,Lusztig:ICC}, and unitriangularity properties of the decomposition matrices (both for the Weyl groups and for the perverse sheaves) so that there is only one possibility for the correspondence. The whole paper is summed up in the following diagram, whose meaning will be gradually explained.

\[
\xymatrix@R=1.4cm@C=1.8cm{
\Irr \KM W 
	\ar@{=}[r] &
\Bipart_W 
	\ar[r]^{\mbox{$\bfl \mapsto \bfl^{*}$}}
	\ar@{^{(}->}[dr]_-{\mbox{$\Psi^{G}_{\KM}$}} &
\Bipart_W 
	\ar@{^{(}->}[d]_-{\mbox{$\Phi^{G}_{\KM}$}}
	\ar@{^{(}->}[r]^-{\mbox{$\Phi_\co^{G}$}}&
\Symb_{G} 
	\ar@/^.5em/[ld]_{\mbox{$\Pi^G$}}
	\ar@{>>}@<1ex>[d] ^{\dist}
	\ar@/^2em/[ddr]^{{\mbox{$\delta_\co$}}}&
\\
\Irr \FM W  
	\ar@{^{(}->}[u]^{ \mbox{$\beta_{\ell} = \gamma_{\ell} $}}  
	\ar@{=}[r] &
\Bipart^{(\ell)}_W 
	\ar@{^{(}->}[u]
	\ar@{^{(}->}[r]_{\mbox{$\Psi^{G}_{\FM}$}} &
\PG_{G}
	\ar@{>>}@<.7ex>[d]^{\pr} 
	\ar@{-->}@/_.4em/^{\mbox{$\delta$}}[drr] &
\DSymb_{G} 
	\ar[ld]_(.6){\mbox{$\Pi^G_1$}}
	\ar@{^{(}->}@<.7ex>[u] 
	\ar[dr]^{{\mbox{$\delta_\co$}}}&
\\
&&
\NG_{G}
	\ar@{^{(}->}@<.7ex>[u]^{\triv}
	\ar@/_.8em/_{\mbox{$\delta: \OC \longmapsto \dim \BC_\OC$}}[rr] &  & 
\NM
}
\]

Let us first briefly describe the objects in the diagram. First, $W$ is the Weyl group of a classical group $G$ over $\ov\FM_p$ with $p$ odd. In the first column, we see its ordinary and modular characters ($\KM$ has characteristic $0$, while $\FM$ has characteristic $\ell\neq 2,p$); the second column contains their parametrizations by bipartitions (there are variants according to the types, but we use the generic notation $\Bipart_W$). In the next column the geometry appears: $\PG_G$ represents the set of pairs consisting of a nilpotent orbit together with an irreducible $G$-equivariant local system; we identify the ordinary and modular cases, because all component groups of centralizers are $2$-groups and $\ell$ is odd. The first projection to the set of nilpotent orbits $\NG_G$ has a section assigning the trivial local system to each nilpotent orbit. In the next column we see a set of symbols associated to $G$, which are very useful to describe the Springer correspondence, as they can parametrize both characters and local systems \cite{Lusztig:ICC,Geck-Malle:support}. There is an equivalence relation on symbols such that similar symbols correspond to different local systems on the same nilpotent orbit; the subset $\DSymb_G$ of distinguished symbols corresponds to the trivial local systems.

Let us now comment on the various Springer correspondences that appear. The arrow $\Psi^{G}_{\KM}$ stands for the ordinary correspondence obtained by Fourier transform, as in \cite{Springer:Trig,Brylinski}, and $\Psi^{G}_{\FM}$ is its modular analogue studied in \cite{Juteau:thesis,Juteau:modspringer,AHJR:Weyl}. However, the ordinary Springer correspondence is most conveniently expressed (via $\Phi^G_\co$) in its other version $\Phi^{G}_{\KM}$, which can be obtained by restricting intersection cohomology complexes to the nilpotent cone \cite{Lusztig:Green,Borho-MacPherson:CRAS}. The two versions differ by the tensoring with the sign character $\Sgn_{W}$, which is reflected combinatorially by transposing the bipartitions. Only some pairs and symbols will be relevant for us:
$\PG_G^\Spr := \im\Phi^G_\KM\supset\im\triv$ and
$\Symb_G^\Spr := \im \Phi^G_\co\supset\DSymb_G$.

\subsection{Outline}
The unitriangularity properties of the decomposition matrices are related to the notion of \emph{basic sets}, which are subsets of the set of ordinary irreducible characters. Usually an order on the characters is fixed. We define the more precise notion of \emph{basic set datum}, where the order is the first half of the structure, the other half being an injection from the
modular characters to the ordinary characters. This is because we will consider several orders simultaneously. The definition and first properties of basic set data are the subject of \S \ref{sec:basic set data}. The main point is that if we have two basic set data whose order relations are comparable (i.e. one is finer than the other), then the injection has to be the same.

In \S \ref{sec:springer basic sets}, we will see that the Springer correspondence gives rise to basic set data for Weyl 
groups. The situation is much simpler when the characteristic $\ell$ does not divide the orders of the components groups of 
the centralizers of the nilpotent elements: one may identify ordinary and modular local systems on nilpotent orbits with a common indexing set $\PG^{G}$, and the Springer order is directly induced by the closure inclusion of nilpotent orbits 
(local systems on the same nilpotent orbit are not comparable). This defines a canonical injection $\beta_\ell: \Irr \FM W \hookrightarrow \Irr \KM W$. As an illustration, we re-derive the modular Springer correspondence for general linear groups using this language, noting that the order on nilpotent orbits and the order relevant to the modular representation theory of the symmetric group are the same: the dominance order on partitions.

From \S \ref{sec:DJ}, we consider only groups of classical types (over $\ov\FM_{p}$ with $p$ odd). We recall results of Dipper and James, which in particular provide a basic set datum, for a very coarse order. In particular, we get an injection $\gamma_{\ell}:  \Irr \FM W \hookrightarrow \Irr \KM W$. At the combinatorial level, this is the inclusion of $\ell$-regular bipartitions into all regular bipartitions (for type $D_{n}$, one has to take unordered bipartitions, possibly with decorations; to have a uniform notation, we call $\Bipart_{W}$ the set of bipartitions which is relevant to $W$).

We will see that the modular Springer correspondence $\Psi^{G}_{\FM}$ can be described by composing $\beta_{\ell}$ with the ordinary Springer correspondence $\Psi^{G}_{\KM}$. Our main theorem says that $\beta_{\ell} = \gamma_{\ell}$, which gives a concrete description of the modular Springer correspondence. In order to prove this, we want to compare the Dipper--James order with the one induced by the Springer correspondence. However, it is easier to compare both of them to a third order, given by the dimensions of the Springer fibres, which have a combinatorial description by \cite{Geck-Malle:support}. In \S \ref{sec:symbols}, we recall the combinatorics of symbols, and the function $\delta_\co$ giving the dimensions of the Springer fibres in terms of symbols. Then in \S \ref{sec:mod}, we prove that the Dipper--James order is compatible with the $\delta_{\co}$ function. Since the Springer order also satisfies this property, we can conclude.

So the trick of using $\delta_{\co}$ allowed us to prove our main theorem with a minimal amount of combinatorics, bypassing the link between symbols and nilpotent orbits. However, it was still an interesting question to determine whether there is a direct relationship between the Dipper--James and Springer orders. We are able to do so in \S \ref{sec:comb}.

\subsection*{Acknowledgements}

During our investigations, it was very helpful to use the development version
\cite{Michel} of the CHEVIE \cite{chevie} package of GAP3 \cite{GAP3}: it contains all
the data of the ordinary (generalized) Springer correspondence. We
thank Jean Michel for his help with programming.

\section{Basic set data}

\label{sec:basic set data}

Let $W$ be a finite group, $\ell$ a prime number, and $(\KM, \OM, \FM)$ a sufficiently large $\ell$-modular system, e.g. we 
can take for $\KM$ a finite extension of $\QM_{\ell}$ containing $m$-th roots of unity, where $m$ is the exponent of $W$, 
with ring of integers $\OM$ and residue field $\FM$. We will denote by $\EM W$ the group algebra of $W$ over $\EM$, and 
for $\EM = \KM$ or $\FM$, we write $\Irr \EM W$ for a set of representatives of isomorphism classes of simple $\EM 
W$-modules. 
We have an $\ell$-modular decomposition matrix
\[
D^{W} := (d^{W}_{E,F})_{E\in\Irr \KM W,\ F \in\Irr \FM W}
\]
where $d^{W}_{E,F}$ is the composition multiplicity of the simple module $F$
in $\FM \otimes_\OM E_\OM$, where $E_\OM$
is some integral form of $E$. This is independent of the choice of
$E_\OM$ \cite[\S 15.2]{Serre}.

\begin{definition}

A basic set datum (for $W$) is a pair ${\mathfrak{B}} = (\leq,\beta)$ consisting of a partial order $\leq$ on $\Irr \KM W$, and 
an injection $\beta: \Irr \FM W \hookrightarrow\Irr \KM W$ such that the following properties hold:
\begin{align}
\label{eq:uni}
d^{W}_{\beta(F),F} = 1 &\text{ for all } F\in\Irr\FM W;\\
\label{eq:tri}
d^{W}_{E,F} \neq0 \Rightarrow E \leq\beta(F) &\text{ for all } E\in\Irr\KM W,\ F\in\Irr\FM W.
\end{align}

\end{definition}

We will need to compare different basic set data. We adopt the convention that
an order relation on a set $X$ is a subset of the product $X \times X$. Thus
we can consider the intersection of two orders, say that one order is included
in another one, and so on.

\begin{proposition}
\label{prop:beta fixed} Let us consider a fixed injection $\beta:
\Irr \FM W \hookrightarrow\Irr \KM W$.

\begin{enumerate}
\item If $(\leq_{1},\beta)$ is a basic set datum and $\leq_{1} \subseteq
\leq_{2}$, then $(\leq_{2},\beta)$ is also a basic set datum.

\item If $(\leq_{1},\beta)$ and $(\leq_{2},\beta)$ are two basic set data,
then $(\leq_{1} \cap\leq_{2},\beta)$ is also a basic set datum.
\end{enumerate}
\end{proposition}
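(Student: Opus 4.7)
The proof is essentially a direct verification from the definition; the only minor subtlety is that in part (2) we must check that the intersection of two partial orders is again a partial order.

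First I would handle part (1). The condition \eqref{eq:uni} involves only $\beta$ and the decomposition numbers, so it is automatically preserved when passing from $\leq_1$ to $\leq_2$. For the triangularity condition \eqref{eq:tri}, I would argue as follows: if $d^W_{E,F}\neq 0$, then by hypothesis $E \leq_1 \beta(F)$; since $\leq_1$ is a subset of $\leq_2$ (as subsets of $\Irr\KM W \times \Irr\KM W$), this yields $E\leq_2 \beta(F)$. The only thing left is to note that $\leq_2$ is a partial order by hypothesis, so $(\leq_2,\beta)$ is indeed a basic set datum.

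For part (2), the first step is to verify that $\leq_1\cap\leq_2$ is a partial order on $\Irr\KM W$. Reflexivity holds because both orders contain the diagonal. Antisymmetry follows from antisymmetry of $\leq_1$ alone: if $(E,E')$ and $(E',E)$ lie in $\leq_1\cap\leq_2$, they lie in $\leq_1$, whence $E=E'$. Transitivity holds because if $(E,E')$ and $(E',E'')$ are in $\leq_1\cap\leq_2$, they are in each $\leq_i$ separately, so $(E,E'')\in\leq_i$ for $i=1,2$.

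Once the intersection is known to be a partial order, condition \eqref{eq:uni} again involves only $\beta$ and hence is inherited. For \eqref{eq:tri}, if $d^W_{E,F}\neq 0$ then $E\leq_1\beta(F)$ and $E\leq_2\beta(F)$, so $E$ is in relation with $\beta(F)$ for $\leq_1\cap\leq_2$. This completes the verification. There is no substantial obstacle; the content of the proposition is really the clean set-theoretic formulation of order relations as subsets of $\Irr\KM W\times\Irr\KM W$, which makes both statements formal.
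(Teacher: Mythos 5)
Your proof is correct and matches the paper's (unstated) argument: the paper dismisses this proposition with ``This follows directly from the definitions,'' and you have simply spelled out that direct verification, including the easy check that the intersection of two partial orders is again a partial order.
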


\begin{proof}
This follows directly from the definitions.
\end{proof}

\begin{proposition}
\label{prop:comp}
Let ${\mathfrak{B}}_{1} = (\leq_{1},\beta_{1})$ and
${\mathfrak{B}}_{2} = (\leq_{2},\beta_{2})$ be two basic set data for $W$.
Then, for any $F\in\Irr\FM W$, we have
\[
\beta_{1}(F) \leq_{2} \beta_{2}(F) \leq_{1} \beta_{1}(F).
\]
If moreover $\leq_{1} \subseteq\leq_{2}$, then $\beta_{1} = \beta_{2}$.
\end{proposition}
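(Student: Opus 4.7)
The plan is very short: the statement follows almost directly by applying each data's defining properties to the other's value $\beta_i(F)$.

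First I would fix $F\in\Irr\FM W$ and invoke the unitriangularity condition \eqref{eq:uni} for $\BG_1$, which gives $d^W_{\beta_1(F),F}=1\ne0$. Since this composition multiplicity is nonzero, the triangularity condition \eqref{eq:tri} for $\BG_2$ forces $\beta_1(F)\leq_2\beta_2(F)$. Interchanging the roles of $\BG_1$ and $\BG_2$ in exactly the same argument (using \eqref{eq:uni} for $\BG_2$ and \eqref{eq:tri} for $\BG_1$) yields $\beta_2(F)\leq_1\beta_1(F)$. This proves the displayed inequality.

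For the second assertion, assume $\leq_1\subseteq\leq_2$. Then the inequality $\beta_2(F)\leq_1\beta_1(F)$ upgrades to $\beta_2(F)\leq_2\beta_1(F)$. Combined with $\beta_1(F)\leq_2\beta_2(F)$ and the antisymmetry of the partial order $\leq_2$, this gives $\beta_1(F)=\beta_2(F)$, so $\beta_1=\beta_2$.

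There is no real obstacle here; the argument is a direct unwinding of the definitions. The only thing to double-check is that one genuinely needs $\leq_1\subseteq\leq_2$ (and not merely that both orders exist) to conclude $\beta_1=\beta_2$: without an inclusion one only gets mutual inequalities in two \emph{different} orders, which need not combine via antisymmetry.
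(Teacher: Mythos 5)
Your proof is correct and follows the paper's argument step for step: apply \eqref{eq:uni} for one datum and \eqref{eq:tri} for the other to get each inequality, then use the inclusion $\leq_1\subseteq\leq_2$ to place both inequalities in the single order $\leq_2$ and invoke antisymmetry. Nothing to add.
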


\begin{proof}
Let $F$ be an element of $\Irr\FM W$. By the property \eqref{eq:uni}
for ${\mathfrak{B}}_{1}$, we have $d^{W}_{\beta_{1}(F),F} = 1 \neq0$, hence by
the property \eqref{eq:tri} for ${\mathfrak{B}}_{2}$, we have $\beta_{1}(F)
\leq_{2} \beta_{2}(F)$. Similarly, by the property \eqref{eq:uni} for
${\mathfrak{B}}_{2}$, we have $d^{W}_{\beta_{2}(F),F} = 1 \neq0$, hence by the
property \eqref{eq:tri} for ${\mathfrak{B}}_{1}$, we have $\beta_{2}(F)
\leq_{1} \beta_{1}(F)$. Thus $\beta_{1}(F) \leq_{2} \beta_{2}(F) \leq_{1}
\beta_{1}(F)$.

If moreover $\leq_{1} \subseteq\leq_{2}$, then $\beta_{1}(F) \leq_{2}
\beta_{2}(F) \leq_{2} \beta_{1}(F)$, hence $\beta_{1}(F) = \beta_{2}(F)$, for
any $F$.
\end{proof}

\begin{corollary}
\label{cor:unique beta} Given a partial order $\leq$ on $\Irr \KM W$,
there is at most one injection $\beta: \Irr\FM W \hookrightarrow
\Irr \KM W$ such that $(\leq,\beta)$ is a basic set datum.
\end{corollary}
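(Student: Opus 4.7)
The plan is to deduce the corollary directly from Proposition \ref{prop:comp}, which already does all the work. Given a partial order $\leq$ on $\Irr\KM W$, suppose we have two injections $\beta_1, \beta_2 : \Irr\FM W \injto \Irr\KM W$ such that both $\BG_1 = (\leq,\beta_1)$ and $\BG_2 = (\leq,\beta_2)$ are basic set data. The goal is to show $\beta_1 = \beta_2$.

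I would invoke the ``moreover'' part of Proposition \ref{prop:comp}: taking $\leq_1 = \leq_2 = \leq$, the inclusion $\leq_1 \subseteq \leq_2$ is trivially satisfied, and the proposition then yields $\beta_1 = \beta_2$ directly. Alternatively (and this is essentially the content of that half of Proposition \ref{prop:comp}), one can argue from the main inequality of the proposition: for any $F \in \Irr\FM W$, it gives $\beta_1(F) \leq \beta_2(F) \leq \beta_1(F)$, and antisymmetry of the partial order $\leq$ forces $\beta_1(F) = \beta_2(F)$.

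There is no real obstacle here; the corollary is purely a restatement of the uniqueness built into Proposition \ref{prop:comp}, whose proof rested on combining the unitriangularity condition \eqref{eq:uni} for one basic set datum with the triangularity condition \eqref{eq:tri} for the other, in both directions. The only thing to emphasize in the write-up is that antisymmetry of $\leq$ is exactly what is needed to turn the two-sided comparison into an equality.
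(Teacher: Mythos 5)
Your proposal is correct and is exactly the argument the paper intends: the corollary is an immediate consequence of Proposition~\ref{prop:comp} applied with $\leq_1 = \leq_2 = \leq$, where the inclusion $\leq_1 \subseteq \leq_2$ holds trivially and the ``moreover'' clause gives $\beta_1 = \beta_2$. The paper states the corollary without a written proof precisely because this specialization is immediate, and your remark that antisymmetry of $\leq$ is the point being used matches the reasoning in the proof of Proposition~\ref{prop:comp}.
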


\section{Springer basic set data}

\label{sec:springer basic sets}

\subsection{Modular Springer correspondence}

\label{subsec:modular springer correspondence}

Let us consider a reductive group $G$ over $\overline\FM_{p}$. We assume that $p$ is very good for $G$ \cite[p.29]{Carter} (thus for classical groups of type $BCD$, we only
have to assume $p \neq2$). The group $G$ acts by the adjoint action on its Lie algebra ${\mathfrak{g}}$, and there are finitely 
many orbits in the nilpotent cone $\NC \subseteq{\mathfrak{g}}$. For $x$ in $\NC$, we denote by $\OC_{x}$ the $G$-orbit of 
$x$. The set ${\mathfrak{N}}_{G}$ of nilpotent orbits is partially ordered by the following relation: $\OC_{1} \leq\OC_{2}$ if 
and only if $\OC_{1} \subseteq\overline\OC_{2}$. For each orbit
$\OC \subseteq\NC$, we choose a representative $x_\OC$, and we denote by $A_{G}(\OC)$ the group of
components of the centralizer $C_{G}(x_\OC)$, that is, $A_{G}(\OC) := C_{G}(x_\OC)/C_{G}(x_\OC)^0$.

As in the last section, we choose an $\ell$-modular system $(\KM, \OM, \FM)$, and we assume $\ell\neq p$. For $\EM=\KM$ 
or $\FM$, let $\PG_\EM^{G}$ (or $\PG_\EM$ if $G$ is clear from the context) denote the set of all pairs $(\OC,\LC)$ where 
$\OC$ 
is a nilpotent orbit in ${\mathfrak{g}}$ and $\LC$ is an irreducible $G$-equivariant $\EM$-local system on $\OC$. 
Moreover, we will denote by $\LC_{\rho}$ the local system corresponding to some $\rho\in\Irr\EM A_G(\OC)$. 
The simple $G$-equivariant perverse sheaves on $\NC$ with $\EM$-coefficients are intersection cohomology complexes 
$\ic(\overline\OC,\LC)$, 
for $(\OC,\LC)\in\PG_\EM$. Similarly to the case of a finite group, one can define a decomposition 
matrix \cite{Juteau:decperv}
\[
D^\NC:=( d_{(\OC,\LC_{\rho}),(\OC',\LC_{\sigma})}^\NC )_{(\OC,\LC_{\rho})\in\PG_\KM,\ (\OC',\LC_{\sigma})\in\PG_\FM}
\]
for $G$-equivariant perverse sheaves on $\NC$, where $d_{(\OC,\LC),(\OC',\LC')}^\NC$ is the composition multiplicity of 
$\ic(\ov\OC',\LC')$ 
in $\FM\otimes_\OM^{L}\ic(\ov\OC,\LC_\OM)$ for any
choice of an integral form $\LC_\OM$ of the local system $\LC$.

Let $W$ be the Weyl group of $G$. If $\EM = \KM$ or $\FM$,
one can naturally define an injective map
\begin{align}
\Psi_\EM = \Psi^{G}_{\EM}: \Irr \EM W \hookrightarrow\PG_\EM\label{def:SpringerCorrespondence}%
\end{align}
using a Fourier-Deligne transform. In the ordinary case, it is the Springer
correspondence of \cite{Brylinski} (and coincides with the one of
\cite{Springer:Trig}), and in the modular case it was first introduced in
\cite{Juteau:thesis}. See also \cite{AHJR:Weyl} for complex reductive Lie algebras.

\begin{remark}
The injectivity of $\Psi_\EM$ comes from the fact that there is an inverse Fourier transform.
\end{remark}

\begin{remark}
We have $\Psi_\EM(\EM) = (\{0\},\EM)$ because the Fourier-Deligne
transform of the constant sheaf is a sky-scraper sheaf supported at the
origin, just as the Fourier transform of a constant function is a Dirac
distribution supported at the origin.
\end{remark}

\begin{remark}
\label{rem:phi} What is usually called Springer correspondence (but only in the case $\EM = \KM$) is the one defined in
\cite{Lusztig:Green,Borho-MacPherson:CRAS}, and for which one can find a
description in all types in \cite{Carter}. We will denote this map by
\[
\Phi_\KM = \Phi_\KM^G: \Irr \KM W \hookrightarrow{\PG}_\KM.
\]
Note that this construction makes sense both for the group $G$ and for its Lie algebra ${\mathfrak{g}}$. Since the characteristic $p$ is very good for $G$, we can also use a Springer isomorphism \cite{Springer:unip} from the unipotent variety to the nilpotent cone to transfer the contruction for the group to the Lie algebra, but by \cite[5.3]{Shoji:asterisque} this gives the same result as working directly with the Lie algebra.
\end{remark}

The maps $\Phi_\KM$ and $\Psi_\KM$ differ by the sign
character $\Sgn_{W}$ of the Weyl group:

\begin{proposition}
\cite[17.7]{Shoji:asterisque} \label{prop:sign}
For any $E\in\Irr \KM W$, we have
\[
\Psi_\KM(E) = \Phi_\KM(\Sgn_{W}\otimes E).
\]
\end{proposition}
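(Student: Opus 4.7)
The plan is to compare the two constructions of the Springer correspondence directly and locate the sign twist. Both $\Phi_\KM$ and $\Psi_\KM$ arise by decomposing essentially the same Springer-type perverse sheaf on $\NC$ into its $W$-isotypic components, so the content of the proposition is that the two $W$-actions involved differ by tensoring with $\Sgn$.

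First I would recall both sides explicitly. Let $\pi: \wt\gg \to \gg$ denote the Grothendieck simultaneous resolution and set $K := \pi_*\ov{\QM}_\ell[\dim\gg]$, a semisimple perverse sheaf on $\gg$ carrying a natural $W$-action: on the open stratum of regular semisimple elements the sheaf restricts to a $W$-torsor local system, and $W$ acts by deck transformations. Restricting $K$ to $\NC$ (and shifting appropriately) recovers the Springer sheaf, whose $W$-isotypic decomposition gives $\Phi_\KM$. For $\Psi_\KM$, one instead applies the Fourier-Deligne transform $\FC$ to $K$, using the Killing form to identify $\gg$ with $\gg^*$; since $K$ is homogeneous under the $\Gm$-action by dilations, $\FC(K)$ is supported on $\NC$, and the map $\Psi_\KM$ is then read off from the $W$-isotypic decomposition of $\FC(K)$.

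The key step is to show that the two $W$-actions at play differ by a twist by $\Sgn$. A calibration on two extremal cases already pins down the discrepancy: the trivial $W$-isotypic component of $K$ is the constant sheaf on $\gg$, whose Fourier-Deligne transform is a skyscraper at $0$, so that $\Psi_\KM(\triv) = (0,1)$, whereas under Lusztig's convention it is the sign character that corresponds to the zero orbit, $\Phi_\KM(\Sgn) = (0,1)$; dually $\Psi_\KM(\Sgn)$ corresponds to the regular nilpotent orbit, matching $\Phi_\KM(\triv)$. To extend this to all of $\Irr\KM W$, one verifies directly on the Grothendieck resolution that $\FC$ intertwines the natural $W$-action on $K$ with the twist by $\Sgn$ of the natural $W$-action on $\FC(K)$, either by a geometric computation reduced to a maximal torus via the Harish-Chandra principle, or by matching the induced maps on Green functions.

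The main obstacle is the careful bookkeeping of the $W$-action through the Fourier-Deligne transform at the level of perverse sheaves, rather than only at the level of characters or Euler characteristics. This comparison is precisely what is carried out in Shoji's Asterisque article (17.7), whose result we simply cite. Once the sign-twist identity for the two $W$-actions is in place, the proposition follows at once, as both $\Phi_\KM$ and $\Psi_\KM$ are determined as injections $\Irr \KM W \injto \PG_\KM$ purely by the $W$-isotypic decomposition of the same underlying semisimple perverse sheaf on $\NC$.
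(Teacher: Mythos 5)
Your proposal is correct and ultimately takes the same route as the paper, which proves nothing here beyond pointing to \cite[17.7]{Shoji:asterisque}: you sketch the conceptual picture (the two constructions decompose Springer-type perverse sheaves on $\NC$ into $W$-isotypic pieces, and the Fourier--Deligne transform introduces a $\Sgn$-twist, calibrated by $\Psi_\KM(\triv) = (0,1) = \Phi_\KM(\Sgn)$) and then defer the hard intertwining argument to Shoji, exactly as the paper does by citing him directly in the statement. The extra motivation you supply is accurate and consistent with the paper's own surrounding remarks.
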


See also \cite{AHJR:Weyl} for the same result in the modular case.
The following theorem from \cite{Juteau:thesis} shows that the ordinary and
modular Springer correspondences respect decomposition numbers.

\begin{theorem}
\label{th:dec} For $E\in\Irr \KM W$ and $F\in\Irr\FM W$, we
have
\[
d^{W}_{E,F} = d^\NC_{\Psi_\KM(E),\Psi_\FM(F)}.
\]

\end{theorem}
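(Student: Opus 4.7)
The plan is to exploit the fact that $\Psi_\EM$ is constructed via the Fourier-Deligne transform, which is defined integrally over $\OM$ and commutes with the modular reduction functor $\FM\otimes^L_\OM(-)$, and then to compare Grothendieck-group classes on the two sides of the correspondence.

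Concretely, I would work with the Springer sheaf $\KC_\EM \in \Perv_G(\NC,\EM)$, defined by a proper pushforward from the Springer resolution (equivalently, the Fourier-Deligne transform of a pushforward from the Grothendieck alteration). This construction commutes with base change of coefficients, so $\FM\otimes^L_\OM\KC_\OM \simeq \KC_\FM$ and $\KM\otimes_\OM\KC_\OM \simeq \KC_\KM$. Moreover $\KC_\EM$ carries a natural $\EM W$-action realizing $\End(\KC_\EM)=\EM W$, and the correspondence is characterized by the fact that the $E$-isotypic component of $\KC_\EM$, for $E\in\Irr\EM W$ in the image of $\Psi_\EM$, has the form $E\otimes\ic(\Psi_\EM(E))$. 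Using this bimodule structure, one packages $\Psi_\EM$ into a functor $\Psi^\sharp_\EM$ from $\EM W\modules$ to $\Perv_G(\NC,\EM)$ sending simple $E$ to $\ic(\Psi_\EM(E))$; the compatibility $\FM\otimes^L_\OM\KC_\OM\simeq\KC_\FM$ propagates to a natural isomorphism $(\FM\otimes^L_\OM-)\circ\Psi^\sharp_\OM \simeq \Psi^\sharp_\FM\circ(\FM\otimes^L_\OM-)$ of derived functors.

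Granted this, the theorem follows by a short bookkeeping argument. Let $E\in\Irr\KM W$ with integral form $E_\OM$, so $[\FM\otimes_\OM E_\OM]=\sum_F d^W_{E,F}[F]$ in $K_0(\FM W)$. Applying $\Psi^\sharp$ and using the commutation above yields
\begin{align*}
\sum_{F\in\Irr\FM W} d^W_{E,F}\,[\ic(\Psi_\FM(F))]
&= [\FM\otimes^L_\OM\ic(\Psi_\KM(E))] \\
&= \sum_{(y,\sigma)\in\PG_\FM} d^\NC_{\Psi_\KM(E),(y,\sigma)}\,[\ic(y,\sigma)]
\end{align*}
in $K_0(\Perv_G(\NC,\FM))$, where the last equality is the definition of $d^\NC$. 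Since the classes $[\ic(y,\sigma)]$ are linearly independent and $\Psi_\FM$ is injective (see the remark above), comparing coefficients gives $d^W_{E,F}=d^\NC_{\Psi_\KM(E),\Psi_\FM(F)}$, together with the vanishing of $d^\NC_{\Psi_\KM(E),(y,\sigma)}$ for $(y,\sigma)\notin\im\Psi_\FM$.

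The main obstacle is the clean construction of the integral functor $\Psi^\sharp_\OM$: one needs t-exactness (so that the equality lives in Grothendieck groups of perverse sheaves, not merely in derived Grothendieck groups), faithful control of the $E$-isotypic decomposition of $\KC_\OM$ over a discrete valuation ring where $\KC_\OM$ need not split as a direct sum of simples, and strict commutation with modular reduction. This is the technical content of the argument in \cite{Juteau:thesis}; once it is in place, the extraction of the decomposition-number equality reduces to the linear-algebra step above.
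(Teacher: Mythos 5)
The paper does not actually prove Theorem~\ref{th:dec}: it is quoted from \cite{Juteau:thesis} and used as a black box, so there is no in-paper proof to compare against. Your sketch is a plausible reconstruction of the argument in the cited reference, and it hits the right high-level points: an integral Springer sheaf $\KC_\OM$ with its $\OM W$-action, commutation of the Fourier--Deligne construction with the modular-reduction functor $\FM\otimes^L_\OM(-)$, and a comparison of classes in $K_0$. You correctly identify where all the work lies: producing a functor $\Psi^\sharp_\EM$ that sends each simple $E$ to $\ic(\Psi_\EM(E))$, descends to $K_0$, and strictly commutes with modular reduction. This is not a formality, because over the discrete valuation ring $\OM$ the Springer sheaf does not split into $E$-isotypic summands, so the phrase ``the $E$-isotypic component'' has no naive meaning integrally; one has to work with projective $\OM W$-modules, or compare local systems on the regular semisimple locus and then apply intermediate extension and Fourier transform, before any isotypic decomposition is invoked. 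You appropriately defer this to the reference. One further remark: your final linear-algebra step also yields, as a by-product, the vanishing $d^\NC_{\Psi_\KM(E),(y,\sigma)}=0$ for $(y,\sigma)\notin\im\Psi_\FM$. This is a genuine additional assertion (it is the mirror image of Proposition~\ref{prop:principal-series}, which controls the other coordinate of the decomposition matrix), not a free consequence of the setup; in \cite{Juteau:thesis} it is proved in the same stroke via the Fourier transform preserving the ``Springer part,'' so its appearance here is a sign you have the right mechanism, but it does mean that the commutation isomorphism you posit must already encode this stability of $\im\Psi$ under reduction. Modulo those deferred technicalities, the structure of your derivation is sound and consistent with the cited source.
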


Hence $D^{W}$ can be seen as a submatrix of $D^\NC$. Now
$D^\NC$ has the following easy property, as follows from considering
supports \cite[Corollary 2.6 and Proposition 2.7]{Juteau:modspringer}.

\begin{proposition}
\label{prop:support} For $(\OC,\LC_{\rho}) \in\PG_\KM$ and $(\OC',\LC_{\sigma}) \in\PG_\FM$, we have
\begin{align}
d^\NC_{(\OC,\LC_{\rho}),(\OC',\LC_{\sigma})}  & = 0 \text{ unless } \OC' \leq \OC,\\
d^\NC_{(\OC,\LC_{\rho}),(\OC',\LC_{\sigma})}  & = d^{A_{G}(\OC)}_{\rho,\sigma} \text{ if } \OC = \OC'.
\end{align}

\end{proposition}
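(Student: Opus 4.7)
The plan is to treat (a) and (b) separately, both by tracking supports and restricting to appropriate open subsets of $\NC$.

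For (a), the key point is that derived modular reduction $\FM\otimes^L_\OM(-)$ preserves supports. Since $\ic(x,\rho)$ is supported on $\ov\OC_x$ by construction of intersection cohomology, so is any integral form $\ic(x,\rho_\OM)$, and hence so is $\FM\otimes^L_\OM\ic(x,\rho_\OM)$. Any perverse composition factor $\ic(y,\sigma)$ of the latter must therefore have support $\ov\OC_y\subseteq\ov\OC_x$, which forces $\OC_y\leq\OC_x$.

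For (b), I would restrict everything to the open orbit $\OC_x$ inside $\ov\OC_x$. Concretely, set $U:=\NC\setminus\bigcup_{\OC<\OC_x}\OC$, an open subset of $\NC$ in which $\OC_x=U\cap\ov\OC_x$ is closed and smooth. Open pullback to $U$ is $t$-exact, kills every $\ic(y,\sigma)$ with $\OC_y<\OC_x$, and sends $\ic(x,\tau)$ to (the extension by zero along $\OC_x\hookrightarrow U$ of) the shifted simple $G$-equivariant local system $\tau[\dim\OC_x]$. Because open restriction commutes with derived modular reduction, the multiplicity $d^\NC_{(x,\rho),(x,\sigma)}$ equals the multiplicity of $\sigma$ as a composition factor of $\FM\otimes^L_\OM\rho_\OM$ in the abelian category of $G$-equivariant $\FM$-local systems on $\OC_x$.

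The final step is to invoke the standard equivalence between $G$-equivariant $\EM$-local systems on $\OC_x\cong G/C_G(x_\OC)$ and finite-dimensional $\EM A_G(x)\modules$; under it, the composition multiplicity computed above coincides with the decomposition number $d^{A_G(x)}_{\rho,\sigma}$ of the finite component group. The main obstacle is verifying the functorial compatibility of open restriction with derived modular reduction at the level of simple objects (and the bookkeeping of the shift $[\dim\OC_x]$); once this and the equivalence between equivariant local systems and component-group representations are in place, both statements follow formally.
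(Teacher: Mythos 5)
Your sketch is correct and follows exactly the support-based reasoning the paper invokes (the paper itself only cites \cite{Juteau:modspringer} with the remark ``considering supports,'' and your two steps---preservation of supports under $\FM\otimes^L_\OM(-)$ for (a), and restriction to the open complement of $\bigcup_{\OC<\OC_x}\OC$ followed by the equivalence between $G$-equivariant local systems on $\OC_x$ and $\FM A_G(x)$-modules for (b)---are precisely the content of the cited Corollary 2.6 and Proposition 2.7). The only point worth making explicit is that after restricting to $U$ the reduction $\FM\otimes^L_\OM\ic(x,\rho_\OM)|_U$ is supported on the closed subset $\OC_x\subseteq U$, so its composition factors in $\Perv(U)$ are automatically of the form $\ic(x,\sigma)|_U$; together with the standard fact that an exact open restriction preserves multiplicities of the simples it does not kill, this completes the identification with $d^{A_G(x)}_{\rho,\sigma}$.
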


In the preceding formula, we have denoted by $d^{A_{G}(\OC)}_{\rho,\sigma}$ the
decomposition numbers for the finite group $A_{G}(\OC)$. In order to define
basic sets using the Springer correspondence, the following result plays a
crucial role \cite[Proposition 5.4]{Juteau:modspringer}.

\begin{proposition}
\label{prop:principal-series} If $(\OC',\LS_{\sigma}) \in\im\Psi_\FM$ and $d^\NC_{(\OC,\LC_{\rho}),(\OC',\LC_{\sigma})} \neq 
0$, then $(\OC,\LC_{\rho}) \in \im\Psi_\KM$.
\end{proposition}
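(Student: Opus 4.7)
The plan is to use the Fourier--Deligne transform $\FC$, which underlies the construction of the modular Springer correspondence $\Psi_\EM$ in \cite{Brylinski,Juteau:thesis}. Since $\FC$ is an equivalence of categories of $G$-equivariant perverse sheaves on $\gg$, defined uniformly over $\OM$ via pullback and pushforward along the natural pairing $\gg \times \gg \to \Ga$, it is exact for the perverse $t$-structure and commutes with modular reduction:
\[
\FM \otimes^L_\OM \FC(\AC) \cong \FC(\FM \otimes^L_\OM \AC)
\]
for any $G$-equivariant $\OM$-perverse sheaf $\AC$ on $\gg$.

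The crux is the following characterization of $\im \Psi_\EM$ built into the construction. Denoting by $j : \NC \injto \gg$ the closed embedding, so that $j_*\ic(y,\sigma)$ is a simple $G$-equivariant $\EM$-perverse sheaf on $\gg$ supported on $\NC$, the pair $(y,\sigma)$ lies in $\im \Psi_\EM$ if and only if $\FC^{-1}(j_*\ic(y,\sigma))$ is a simple $G$-equivariant $\EM$-perverse sheaf with full support $\gg$. In that case it is the intermediate extension $\ic(\gg, \LC_F)$ of the $F$-isotypic local system $\LC_F$ on the regular semisimple locus $\gg^\rs$ arising from the Galois $W$-cover $\wt\gg^\rs \to \gg^\rs$, where $F = \Psi_\EM^{-1}(y,\sigma) \in \Irr \EM W$. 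Conversely every full-support simple $G$-equivariant $\EM$-perverse sheaf on $\gg$ arises this way, because the irreducible $G$-equivariant $\EM$-local systems on $\gg^\rs$ are exhausted by the isotypic components of the $W$-cover.

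With this characterization in hand, the proof is immediate. Assume $(y,\sigma) \in \im \Psi_\FM$ and $d^\NC_{(x,\rho),(y,\sigma)} \neq 0$. In the Grothendieck group of $G$-equivariant $\FM$-perverse sheaves on $\gg$, the definition of the decomposition numbers gives
\[
[\FM \otimes^L_\OM j_*\ic(x,\rho_\OM)] = \sum_{(y',\sigma') \in \PG_\FM} d^\NC_{(x,\rho),(y',\sigma')}\, [j_*\ic(y',\sigma')],
\]
so $[j_*\ic(y,\sigma)]$ appears on the right with positive coefficient. Applying the equivalence $\FC^{-1}$ and invoking commutation with modular reduction, the class $[\FC^{-1}(j_*\ic(y,\sigma))]$ occurs with positive coefficient in $[\FM \otimes^L_\OM \FC^{-1}(j_*\ic(x,\rho_\OM))]$. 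Since $\FC^{-1}(j_*\ic(y,\sigma))$ has full support $\gg$, so does $\FM \otimes^L_\OM \FC^{-1}(j_*\ic(x,\rho_\OM))$, and therefore so does the simple $\KM$-perverse sheaf $\FC^{-1}(j_*\ic(x,\rho))$ (whose support equals that of any integral form). The characterization then forces $(x,\rho) \in \im \Psi_\KM$.

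The main technical point to verify is the characterization of $\im \Psi_\EM$ via full support after Fourier inversion in the modular case $\EM = \FM$: specifically, that every irreducible $G$-equivariant $\FM$-local system on $\gg^\rs$ is of the form $\LC_F$ for some $F \in \Irr \FM W$. This is classical in the ordinary case, and in the modular case it follows from the same $W$-cover analysis, using that $(\KM,\OM,\FM)$ is a sufficiently large $\ell$-modular system and that the very good hypothesis on $p$ ensures $\gg^\rs \to \gg^\rs/G$ behaves as expected.
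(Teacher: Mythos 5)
Your overall strategy — apply the Fourier--Deligne transform $\FC$, use that it is a $t$-exact equivalence commuting with modular reduction, and track supports — is essentially the approach used in the cited reference \cite[Proposition~5.4]{Juteau:modspringer}. The reduction of the statement to the claim ``if $\FC^{-1}(j_*\ic(x,\rho))$ has full support $\gg$ then $(x,\rho)\in\im\Psi_\KM$'' is correct, and the support argument (a nonzero decomposition number forces $\FC^{-1}(j_*\ic(x,\rho_\OM))$ to have a full-support composition factor after reduction, hence full support itself) is sound.

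The problem is the justification you give for that key converse. You assert that ``the irreducible $G$-equivariant $\EM$-local systems on $\gg^\rs$ are exhausted by the isotypic components of the $W$-cover.'' This is false: $G$-equivariant local systems on $\gg^\rs$ are the same as local systems on $\tg^\rs/W$ (the gerbe banded by the connected torus $T$ is trivial), and $\pi_1(\tg^\rs/W)$ is the Artin braid group $B_W$, not $W$. There are many irreducible local systems that do not factor through $W$. What is true, and what you actually need, is that for $(x,\rho)\in\PG_\KM$ \emph{not} in $\im\Psi_\KM$, the simple perverse sheaf $\FC^{-1}(j_*\ic(x,\rho))$ has support in a \emph{proper} closed subvariety of $\gg$. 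This is a consequence of Lusztig's generalized Springer correspondence over $\KM$ \cite{Lusztig:ICC}: pairs outside the principal series belong to series attached to cuspidal data on proper Levi subalgebras, and Lusztig shows their Fourier transforms are supported on the corresponding proper closed pieces of $\gg$. (For cuspidal pairs on $\gg$ itself, the Fourier transform remains supported on $\NC$.) This classification, not the incorrect fundamental-group claim, is the essential nontrivial ingredient.

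A secondary point: you say the technical point to verify is the converse of the characterization over $\FM$, but in fact the $\FM$-side of your argument uses only the \emph{forward} direction (``$(y,\sigma)\in\im\Psi_\FM$ implies full support''), which is immediate from the definition of $\Psi_\FM$. The converse direction is needed only over $\KM$, where, as noted, it is Lusztig's theory. This is actually good news — it avoids having to develop a modular generalized Springer correspondence — but it means your stated ``main technical point'' is pointed in the wrong direction.
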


\subsection{Definition of Springer basic sets}

\label{subsec:definition}

Suppose that we have a family ${\mathfrak{B}}_{\bullet}= ({\mathfrak{B}}_\OC)$ of basic set data for the groups 
$A_{G}(\OC)$. 
For each nilpotent orbit $\OC$, we write ${\mathfrak{B}}_\OC = (\leq_\OC,\beta_\OC)$. To such a family 
${\mathfrak{B}}_{\bullet}$, 
we will associate a basic set datum 
$(\leq^{\mathcal{N}}_{{\mathfrak{B}}_{\bullet}},\beta^\NC_{{\mathfrak{B}}_{\bullet}})$ 
for $W$, the \emph{Springer basic set datum}.

\begin{remark}
If $G$ is simple and of adjoint type (which we may assume for the purpose of
describing the correspondence, since we are working in a modular, but not
generalized setting, in the sense of Lusztig), then the $A_{G}(\OC)$
are as follows: trivial in type $A_{n}$, elementary abelian $2$-groups in
other classical types, and symmetric groups of rank at most $5$ in exceptional
types. We know that these groups admit basic sets. In the sequel it will be
combinatorially more convenient to work with special orthogonal and symplectic
groups (rather than the adjoint groups), but for those the $A_{G}(\OC)$ are also elementary abelian $2$-groups.
\end{remark}

\begin{remark}
In the case where $\ell$ does not divide the orders of the groups $A_{G}(x)$,
we will see that much of the discussion simplifies (see \S
\ref{subsec:good}). In particular, there will be no need to choose basic set
data for each orbit: in that case, we may assume that we take the trivial
order everywhere, and the $\beta_\OC$ are canonical bijections (in
that case, the Brauer characters are just the ordinary characters).
\end{remark}

\begin{definition}
We define the partial order $\leq^\NC_{{\mathfrak{B}}_{\bullet}}$ on
$\Irr\KM W$ by the following rule. For $i \in\{1,2\}$, let $E_{i}
\in\Irr\KM W$, and let us write $\Psi_\KM(E_{i}) =
(\OC_{i},\LC_{\rho_i})$. Then
\[
E_{1} \leq^\NC_{{\mathfrak{B}}_{\bullet}} E_{2} \Longleftrightarrow%
\begin{cases}
\text{either} & \OC_{2} < \OC_{1},\\
\text{or} & (\OC_{1} = \OC_{2} =: \OC \text{ and
} \rho_{1} \leq_\OC \rho_{2} ).
\end{cases}
\]

\end{definition}

It is clear that this defines indeed an order relation on $\Irr \KM
W$ (for the antisymmetry, one needs to use the injectivity of $\Psi
_\KM$).

\begin{proposition}
\label{prop:F->E} Let $F \in\Irr \FM W$, and let us write
$\Psi_\FM(F) = (\OC,\LC_\sigma)$. Then there exists a unique $E
\in\Irr\KM W$ such that $\Psi_\KM(E) = (\OC,
\LC_{\beta_\OC(\sigma)})$.
\end{proposition}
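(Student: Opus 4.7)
The plan is to split the statement into uniqueness and existence. Uniqueness is immediate: if two elements $E, E' \in \Irr\KM W$ both satisfied $\Psi_\KM(E) = \Psi_\KM(E') = (\OC,\beta_\OC(\sigma))$, then the injectivity of $\Psi_\KM$ (noted in the remark following \eqref{def:SpringerCorrespondence}) forces $E = E'$. So the real content is to produce some $E \in \Irr\KM W$ mapping to $(\OC,\beta_\OC(\sigma))$, equivalently to show that $(\OC,\beta_\OC(\sigma)) \in \im \Psi_\KM$.

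To obtain this, I would invoke Proposition \ref{prop:principal-series}: to place $(\OC,\beta_\OC(\sigma))$ inside $\im \Psi_\KM$, it suffices to exhibit some element of $\im \Psi_\FM$ with which it has a nonzero decomposition number. The natural candidate is $\Psi_\FM(F) = (\OC,\sigma)$ itself. Thus the task reduces to showing
\[
d^\NC_{(\OC,\beta_\OC(\sigma)),(\OC,\sigma)} \neq 0.
\]
Since both pairs are supported on the \emph{same} orbit $\OC$, the second clause of Proposition \ref{prop:support} applies and identifies this decomposition number with the decomposition number $d^{A_G(\OC)}_{\beta_\OC(\sigma),\sigma}$ for the component group. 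By property \eqref{eq:uni} of the basic set datum $\BG_\OC = (\leq_\OC,\beta_\OC)$ chosen for $A_G(\OC)$, this decomposition number equals $1$, hence is nonzero. Combining, $(\OC,\beta_\OC(\sigma)) \in \im \Psi_\KM$, and then injectivity produces the desired unique $E$.

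There is not really a substantive obstacle: the proposition is a bookkeeping statement whose purpose is to set up the definition of $\beta^\NC_{\BG_\bullet}(F) := E$, and all the machinery is already in place. The one subtle point to be careful about is invoking Proposition \ref{prop:principal-series} in the right direction (we want to deduce that a $\KM$-pair lies in $\im\Psi_\KM$ from the fact that its decomposition with an $\FM$-pair in $\im\Psi_\FM$ is nonzero), but this is exactly what the proposition provides. Everything else is a direct application of Proposition \ref{prop:support} and the defining property \eqref{eq:uni} of a basic set datum.
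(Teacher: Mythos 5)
Your proof is correct and follows exactly the same route as the paper's: uniqueness from injectivity of $\Psi_\KM$, and existence by computing $d^\NC_{(\OC,\beta_\OC(\sigma)),(\OC,\sigma)} = d^{A_G(\OC)}_{\beta_\OC(\sigma),\sigma} = 1 \neq 0$ via Proposition~\ref{prop:support} and property~\eqref{eq:uni}, then invoking Proposition~\ref{prop:principal-series}. If anything, you make the appeal to Proposition~\ref{prop:support} more explicit than the paper does.
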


\proof{
We have
$(\OC,\LC_\sigma) = \Psi_\FM(F) \in \im \Psi_\FM$,
and
\[
d^\NC_{(\OC,\LC_{\beta_\OC(\sigma)}),(\OC,\LC_\sigma)} =
d^{A_G(\OC)}_{\beta_\OC(\sigma),\sigma} = 1\neq 0,
\]
hence $(\OC, \LC_{\beta_\OC(\sigma)}) \in \im \Psi_\KM$ by Proposition
\ref{prop:principal-series}. This proves the existence of $E$.
The uniqueness of $E$ follows from the injectivity of $\Psi_\KM$.
\qed
}

\begin{definition}
We define the map $\beta^\NC_{{\mathfrak{B}}_{\bullet}} :
\Irr\FM W \rightarrow\Irr \KM W$ by associating to each
$F\in\Irr\FM W$ the module $E\in\Irr\KM W$ provided by
Proposition \ref{prop:F->E}. \label{def:Springerbeta}
\end{definition}

In other words, $\beta^\NC_{{\mathfrak{B}}_{\bullet}}$ completes the
following commutative diagram :
\[
\xymatrix{ \Irr \FM W \ar[r]^(.6){\Psi_\FM} \ar[d]_{\beta^\NC_{\BG_\bullet}} & \PG_\FM \ar[d]^{\beta_\bullet} \ar[r]^(.3){\sim} & 
\sqcup_\OC \Irr \FM A_G(\OC) \ar[d]^{\sqcup_\OC \beta_\OC} \\ \Irr \KM W \ar[r]_(.6){\Psi_\KM} & \PG_\KM \ar[r]_(.3){\sim} & 
\sqcup_\OC \Irr \KM A_G(\OC)\\ }
\]
where $\beta_{\bullet}$ is defined by the commutativity of the right square.
The $\beta_\OC$ are injective, thus $\beta_{\bullet}$ is injective,
and the composition $\beta_{\bullet}\circ\Psi_\FM = \Psi_\KM
\circ\beta^\NC_{{\mathfrak{B}}_{\bullet}}$ is injective as well. It
follows that $\beta^\NC_{{\mathfrak{B}}_{\bullet}}$ is injective.

\begin{theorem}
\label{th:SpringerBasicSet} Assume we have chosen a family ${\mathfrak{B}%
}_{\bullet}= ({\mathfrak{B}}_\OC)$ of basic set data for the groups
$A_{G}(\OC)$, where $\OC$ runs over all nilpotent orbits.
Then the pair $(\leq^\NC_{{\mathfrak{B}}_{\bullet}}, \beta
^\NC_{{\mathfrak{B}}_{\bullet}})$ defined above is a basic set datum
for $W$.
\end{theorem}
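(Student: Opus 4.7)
The plan is to verify the two axioms \eqref{eq:uni} and \eqref{eq:tri} directly, using Theorem \ref{th:dec} to translate the decomposition numbers of $W$ into decomposition numbers of perverse sheaves on $\NC$, and then Proposition \ref{prop:support} to reduce questions about the latter to questions about decomposition numbers of the finite component groups $A_G(\OC)$, where we may invoke the hypothesis that each $\BG_\OC$ is a basic set datum.

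First I would check the unitriangularity condition \eqref{eq:uni}. Fix $F\in\Irr\FM W$ and write $\Psi_\FM(F) = (\OC,\sigma)$; by Definition \ref{def:Springerbeta} (and Proposition \ref{prop:F->E}), $\Psi_\KM(\beta^\NC_{\BG_\bullet}(F)) = (\OC,\beta_\OC(\sigma))$. Then Theorem \ref{th:dec} gives
\[
d^W_{\beta^\NC_{\BG_\bullet}(F),F} = d^\NC_{(\OC,\beta_\OC(\sigma)),(\OC,\sigma)},
\]
and since the two orbits coincide, the second case of Proposition \ref{prop:support} identifies this with $d^{A_G(\OC)}_{\beta_\OC(\sigma),\sigma}$, which equals $1$ because $\BG_\OC = (\leq_\OC,\beta_\OC)$ is a basic set datum for $A_G(\OC)$.

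Next I would verify the triangularity condition \eqref{eq:tri}. Let $E\in\Irr\KM W$ and $F\in\Irr\FM W$ with $d^W_{E,F}\neq 0$, and write $\Psi_\KM(E) = (\OC_1,\rho_1)$ and $\Psi_\FM(F) = (\OC_2,\sigma)$. By Theorem \ref{th:dec}, $d^\NC_{(\OC_1,\rho_1),(\OC_2,\sigma)} \neq 0$, so Proposition \ref{prop:support} forces $\OC_2 \leq \OC_1$. Split into two cases: if $\OC_2 < \OC_1$, then the first clause of the definition of $\leq^\NC_{\BG_\bullet}$ applies to $E$ and $\beta^\NC_{\BG_\bullet}(F)$ (whose associated orbit is $\OC_2$), giving $E \leq^\NC_{\BG_\bullet} \beta^\NC_{\BG_\bullet}(F)$. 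If instead $\OC_1 = \OC_2 =: \OC$, then Proposition \ref{prop:support} yields $d^{A_G(\OC)}_{\rho_1,\sigma} = d^\NC_{(\OC,\rho_1),(\OC,\sigma)} \neq 0$; by the triangularity property \eqref{eq:tri} for the basic set datum $\BG_\OC$, this forces $\rho_1 \leq_\OC \beta_\OC(\sigma)$, and the second clause of the definition of $\leq^\NC_{\BG_\bullet}$ again gives $E \leq^\NC_{\BG_\bullet} \beta^\NC_{\BG_\bullet}(F)$.

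In truth there is no genuine obstacle here: once the nontrivial inputs (Theorem \ref{th:dec}, Proposition \ref{prop:support}, and Proposition \ref{prop:principal-series}, the last being needed only to ensure $\beta^\NC_{\BG_\bullet}$ is well defined) are in hand, the verification is essentially a diagram chase. The one point that requires a moment of care is the direction of the orbit inequality in the definition of $\leq^\NC_{\BG_\bullet}$: one must check that the convention $E_1 \leq^\NC_{\BG_\bullet} E_2 \iff \OC_2 < \OC_1$ or $(\OC_1 = \OC_2,\ \rho_1 \leq_\OC \rho_2)$ matches the direction of closure containment delivered by Proposition \ref{prop:support}, which it does.
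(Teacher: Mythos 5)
Your proof is correct and follows essentially the same path as the paper: both verify \eqref{eq:uni} by composing Theorem \ref{th:dec} with the second clause of Proposition \ref{prop:support}, and both verify \eqref{eq:tri} via Theorem \ref{th:dec} and Proposition \ref{prop:support}, splitting into the cases of strict orbit inequality and orbit equality. Your remark that Proposition \ref{prop:principal-series} underlies the well-definedness of $\beta^\NC_{\BG_\bullet}$ is also exactly how the paper sets things up (via Proposition \ref{prop:F->E}).
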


\proof{
Let $F \in \Irr \FM W$. Let us write $(\OC',\LC_\sigma) := \Psi_\FM(F)$,
and let $E_F := \beta^\NC_{\BG_\bullet}(F)$. By definition, we have
$\Psi_\KM(E_F) = (\OC', \LC_{\beta_{\OC'}(\sigma)})$. Thus
\[
d^W_{\beta^\NC_{\BG_\bullet}(F),F} = d^W_{E_F,F}
= d^\NC_{(\OC', \LC_{\beta_\OC(\sigma)}),(\OC',\LC_\sigma)}
= d^{A_G(\OC')}_{\beta_{\OC'}(\sigma),\sigma}
= 1,
\]
using Theorem \ref{th:dec} and Proposition \ref{prop:support}.

Now, in addition, let $E \in \KM W$, and assume $d^W_{E,F} \neq 0$.
Let us write $(\OC, \LC_\rho) := \Psi_\KM(E)$.
By Theorem \ref{th:dec} again, we have
\[
d^\NC_{(\OC, \LC_\rho),(\OC', \LC_\sigma)} = d^W_{E,F} \neq 0
\]
hence $\OC \geq \OC'$ by Proposition \ref{prop:support}.
Now, if $\OC = \OC'$, then we have
\[
0 \neq d^\NC_{(\OC,\LC_\rho),(\OC,\LC_\sigma)} = d^{A_G(\OC)}_{\rho,\sigma}
\]
again by Proposition \ref{prop:support}, hence
$\rho \leq_\OC \beta_\OC(\sigma)$ since
$\BG_\OC = (\leq_\OC,\beta_\OC)$ is a basic set for $A_G(\OC)$.
This proves that $E \leq^\NC_{\BG_\bullet} E_F = \beta^\NC_{\BG_\bullet}(F)$.
Thus $(\leq^\NC_{\BG_\bullet}, \beta^\NC_{\BG_\bullet})$
is a basic set datum for $W$.
\qed
}

\bigskip

The pair $(\leq^\NC_{{\mathfrak{B}}_{\bullet}}, \beta^{\mathcal{N}%
}_{{\mathfrak{B}}_{\bullet}})$ is the Springer basic set datum for $W$
associated to the choice of the family ${\mathfrak{B}}_{\bullet}$, and we call
the image of $\beta^\NC_{{\mathfrak{B}}_{\bullet}}$ a Springer basic set.

\subsection{Good case}

\label{subsec:good}

In this paragraph, we assume that $\ell$ does not divide the orders of the
groups $A_{G}(\OC)$, where $\OC$ runs over the nilpotent
orbits in ${\mathfrak{g}}$. Then the algebras $\FM A_{G}%
(\OC)$ are semisimple and the decomposition matrices $D^{A_{G}%
(\OC)}$ are equal to the identity matrix. Thus we can take the
trivial order (that is, the identity relation) $=_\OC$ on
$\Irr \KM A_{G}(\OC)$, and $\beta_\OC$ is a
canonical bijection. By abuse of notation, we will identify the sets
$\PG_\KM$ and $\PG_\FM$ with a common
index set $\PG$. Let us reformulate the definition of Springer
basic sets in this favorable situation.
The order $\leq^\NC_{{\mathfrak{B}}_{\bullet}}$, in this case, is
the order $\leq^\NC_{\mathrm{triv}}$ defined by
\[
E_{1} \leq^\NC_{\mathrm{triv}} E_{2} \Longleftrightarrow\left( E_{1}
= E_{2} \text{ or } \OC_{2} < \OC_{1}\right)
\]
where $E_{i} \in\Irr \KM W$ and $(\OC_{i},\LC%
_{i}) := \Psi_\KM(E_{i})$, for $i = 1,\ 2$.

The injection $\beta^\NC_{{\mathfrak{B}}_{\bullet}}$ is the map
$\beta_{\ell}: \Irr \FM W \hookrightarrow\Irr \KM W$ defined
in the following way: to $F \in\Irr \FM W$, we associate the unique
$E := \beta_{\ell}(F) \in\Irr\KM W$ such that $\Psi_\KM(E) =
\Psi_\FM(F) \in\PG$.

Then the pair ${\mathfrak{B}}_{\ell}:= (\leq^\NC_{\mathrm{triv}%
},\beta_{\ell})$ is a basic set datum for $W$. Moreover, any choice of a
family ${\mathfrak{B}}_{\bullet}$ of basic set data for the $A_{G}%
(\OC)$ will give rise to a basic set datum for $W$ with the same
injection $\beta_{\ell}$, since the trivial order $=_\OC$ is
contained in any chosen order $\leq_\OC$, and thus the order
$\leq^\NC_{\mathrm{triv}}$ is contained in $\leq^{\mathcal{N}%
}_{{\mathfrak{B}}_{\bullet}}$, and Proposition \ref{prop:comp} applies.

\begin{remark}
Basic sets for Weyl groups (and actually for Hecke algebras) in the good prime
case have already been constructed, for example in \cite{Geck-Rouquier}. For
symmetric groups, this goes back to James \cite{James:irr}.
\end{remark}

\subsection{The case of the general linear group}

Although the modular Springer correspondence for ${\mathbf{GL}}_{n}$ has been
determined in \cite{Juteau:thesis}, we will do it again here in the language
of Springer basic sets, as this case is much simpler than the other classical
types, for which it will serve as a model.

Let $\Part_{n}$ denote the set of all partitions of $n$. The prime $\ell$
being fixed, the subset $\Part^{(\ell)}_{n}$ consists of $\ell$-regular
partitions, that is, those which do not contain entries repeated at least
$\ell$ times. The symbols $\lambda\vdash n$ and $\lambda\vdash^{(\ell)}
n$ mean respectively $\lambda\in\Part_{n}$ and $\lambda\in\Part^{(\ell)}_{n}$.

Nilpotent orbits for ${\mathbf{GL}}_{n}$ are parametrized by $\Part_{n}$ via the Jordan normal form. We will denote them by $\OC_\lambda$, for $\lambda\in\Part_n$. The orbit closure order is given by the classical
dominance order on partitions.

It turns out that the simple modules of $\KM{\mathfrak{S}}_{n}$, called Specht modules, are also parametrized by 
$\Part_{n}$. 
In \cite{James:irr}, James classifies the simple modules for ${\mathbb{F}}{\mathfrak{S}}_{n}$ as follows. The Specht 
module $S^{\lambda}$ is defined as a certain submodule of the permutation module $M^{\lambda} = 
\KM[\SG_n/\SG_\lambda]$, 
where ${\mathfrak{S}}_{\lambda}$ is the parabolic subgroup corresponding to the partition $\lambda$. The 
permutation module $M^{\lambda}$ is endowed with a natural scalar product, which restricts to a scalar product on 
$S^{\lambda}$. The module and the bilinear form are defined over $\OM$ (actually over ${\mathbb{Z}}$), and thus one can 
reduce them to get a module for $\FM {\mathfrak{S}}_{n}$, still called a Specht module, endowed with a bilinear form, which 
no longer needs to be non-degenerate. Then the quotient of the Specht module $S^{\lambda}$ by the radical of the form is 
either zero or a simple module denoted by $D^{\lambda}$. The partitions giving a non-zero result are exactly the 
$\ell$-regular 
partitions. The $D^{\lambda}$, for $\lambda\in\Part^{(\ell)}_{n}$, form a complete set of representatives of 
$\FM{\mathfrak{S}}_{n}$-modules. 
Moreover, James shows that (in our language) the pair consisting of the dominance order 
on partitions, and the
injection sending $D^{\lambda}$ ($\lambda\in\Part^{(\ell)}_{n}$) to $S^{\lambda}$, is a basic set datum, which we will call the 
James basic set datum.

Now, the ordinary Springer correspondence sends the Specht module $S^{\lambda}$ to the orbit $\OC_{\lambda^{*}}$ (with the trivial local system), where $\lambda^{*}$ denotes the conjugate partition of $\lambda$. It follows that the Springer and James basic set data involve the same order relation, hence they coincide by Corollary
\ref{cor:unique beta}. We have deduced the modular Springer correspondence for
${\mathbf{GL}}_{n}$.

\begin{theorem}
\label{th:GLn} 
For $\mu \in \Part^{(\ell)}_{n}$, we have $\Psi^{\GL_n}_\FM(D^\mu) = (\OC_{\mu^*}, \FM)$.
\end{theorem}

To simplify the notation, we set $d_{\lambda,\mu}^{{\mathfrak{S}}_{n}}:=d_{S^{\lambda},D^{\mu}}^{{\mathfrak{S}}_{n}}$ and 
$d_{\lambda,\mu}^\NC:=d_{(\OC_{\lambda},\KM),(\OC_{\mu},\FM)}^\NC$. Theorems \ref{th:dec} and \ref{th:GLn} imply that
\[
d_{\lambda,\mu}^{{\mathfrak{S}}_{n}}=d_{\lambda^{\ast},\mu^{\ast}%
}^\NC%
\]
for $\lambda\vdash n$ and $\mu\vdash^{(\ell)}n$.

\section{Dipper--James basic set data}

\label{sec:DJ}

\subsection{Type \texorpdfstring{$B_{n}/C_{n}$}{B/C}}
\label{subsec:DJ-BC}

Let $W_{n}$ be the Weyl group of type $B_{n}$. We can identify it with the group of signed permutations of the set $\{\pm 1, \dots, \pm n\}$. The classification of ordinary and modular simple modules is completely analogous to the case of the 
symmetric group \cite{Dipper-James:B_n}. We have a parametrization $\Irr \KM W_{n} = \{{\mathbf{S}}^{\boldsymbol{\lambda
}} \mid\lamb\in\Bipart_{n}\}$, where $\Bipart_{n}$ denotes the set of all bipartitions of $n$, that is, the set of pairs of partitions
$\lamb = (\lambda^{(1)},\lambda^{(2)})$ such that $|\lambda^{(1)}| + |\lambda^{(2)}| = n$. For example, the trivial 
representation is labeled by the pair $((n),\emptyset)$ and the sign representation by $((\emptyset,(1^{n}))$.

Again, those modules are defined over ${\mathbb{Z}}$ and endowed with a bilinear form; factoring out the radical of the form 
over $\FM$ yields $0$ or a simple module; and one obtains a complete collection of simple $\FM W_{n}$-modules in this 
way. In our case ($\ell\neq2$), the result is that we have a parametrization $\Irr \FM W_{n} = \{ \DB^\lamb 
\mid\lamb\in\Bipart^{(\ell)}_{n}\}$, where $\Bipart^{(\ell)}_{n}$ denotes the set of $\ell$-regular bipartitions of $n$, that is, the 
bipartitions $\lamb=(\lambda^{(1)},\lambda^{(2)})$ such that $\lambda^{(1)}$ and $\lambda^{(2)}$ are $\ell$-regular. Naming 
the modular simple modules in this way implicitly amounts to define an injection $\gamma_{\ell}$ from $\Irr \FM W_{n}$ to 
$\Irr 
\KM W_{n}$, sending ${\mathbf{D}}^\lamb$ to ${\mathbf{S}} ^\lamb$, for $\lamb\in\Bipart^{(\ell)}_{n}$.

By \cite{Dipper-James:B_n}, a Morita equivalence allows to express the
decomposition numbers $d^{W_{n}}_{\lamb,{\boldsymbol{\mu}}%
}:=d^{W_{n}}_{{\mathbf{S}}^\lamb,{\mathbf{D}}%
^{\boldsymbol{\mu}}}$ of $W_{n}$ in terms of decomposition numbers for the
symmetric groups ${\mathfrak{S}}_{r}$ for $0\leq r\leq n$.

\begin{proposition}
\cite{Dipper-James:B_n}
(Recall that $\ell\neq 2$.)  Let $\lamb=(\lambda^{(1)},\lambda^{(2)})$ and ${\boldsymbol{\mu}}=(\mu^{(1)},\mu^{(2)})$ be two
bipartitions of $n$, the latter being $\ell$-regular. If
$|\lambda^{(1)}|=|\mu^{(1)}| =:r$, then
\[
d^{W_{n}}_{\lamb,{\boldsymbol{\mu}}} = d^{{\mathfrak{S}}_{r}%
}_{\lambda^{(1)},\mu^{(1)}} \cdot d^{{\mathfrak{S}}_{n-r}}_{\lambda^{(2)}%
,\mu^{(2)}}%
\]
and otherwise we have $d^{W_{n}}_{\lamb,{\boldsymbol{\mu}}%
}=0$.
\end{proposition}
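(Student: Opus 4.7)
The plan is to apply Clifford theory to the normal elementary abelian subgroup $N := (\ZM/2)^n$ of the wreath product $W_n = N \rtimes \SG_n$. Since $\ell \neq 2$, the algebra $\FM N$ is semisimple, and its primitive idempotents are permuted by $\SG_n$ with orbits indexed by the integer $r \in \{0,\ldots,n\}$ counting the number of nontrivial characters in a tuple. The sum $e_r$ of the idempotents in the $r$-th orbit is central in $\FM W_n$, so we obtain a decomposition $\FM W_n = \bigoplus_{r=0}^n \FM W_n e_r$ that refines composition series: both simple $\FM W_n$-modules and reductions mod $\ell$ of $\KM W_n$-lattices split according to the $e_r$.

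Next I would invoke the standard Clifford-theoretic Morita equivalence to identify each summand $\FM W_n e_r$ with $\FM[\SG_r \times \SG_{n-r}]$, the stabilizer of a representative character (no cocycle twist intervenes since $N$ is abelian with $\ell \nmid |N|$). The key combinatorial step is then to verify that under this equivalence the ordinary Specht module $\SB^{\bfl}$ with $|\lambda^{(1)}| = r$ corresponds to the external tensor product $S^{\lambda^{(1)}} \boxtimes S^{\lambda^{(2)}}$, and that $\DB^{\bfm}$ with $|\mu^{(1)}| = r$ (and $\bfm$ $\ell$-regular) corresponds to $D^{\mu^{(1)}} \boxtimes D^{\mu^{(2)}}$. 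This uses the Dipper--James construction of $\SB^{\bfl}$ as a submodule of a module induced from the parabolic $W_r \times W_{n-r}$ of a suitably twisted permutation module, together with the compatibility of the invariant bilinear form (and hence its radical) with the Morita functor.

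Granting these identifications, the conclusion is immediate. If $|\lambda^{(1)}| \neq |\mu^{(1)}|$, then $\SB^{\bfl}$ and $\DB^{\bfm}$ are supported on different summands $\FM W_n e_r$, so $\DB^{\bfm}$ cannot appear as a composition factor in the reduction of $\SB^{\bfl}$, and $d^{W_n}_{\bfl,\bfm} = 0$. When $|\lambda^{(1)}| = |\mu^{(1)}| = r$, the claim reduces to the well-known factorisation of decomposition numbers over a direct product, $d^{\SG_r \times \SG_{n-r}}_{V_1 \boxtimes V_2,\, D_1 \boxtimes D_2} = d^{\SG_r}_{V_1,D_1} \cdot d^{\SG_{n-r}}_{V_2,D_2}$, which holds because external tensor products of $\OM$-lattices reduce to external tensor products over $\FM$, and these remain simple since the residue field is sufficiently large. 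The main obstacle I anticipate is the second step, namely the explicit matching of $\SB^{\bfl}$ and $\DB^{\bfm}$ with external tensor products under the Morita equivalence; the block-theoretic vanishing and the tensor factorisation over a direct product are then formal.
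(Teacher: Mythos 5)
The paper does not prove this proposition; it simply cites Dipper--James, remarking only that ``a Morita equivalence allows to express the decomposition numbers $d^{W_n}_{\bfl,\bfm}$ in terms of decomposition numbers for the symmetric groups.'' Your proposal fills in exactly the Morita equivalence the paper alludes to, at the group algebra level: Dipper--James actually establish a Morita equivalence between blocks of the Hecke algebra of type $B_n$ (with invertible parameter) and tensor products of Hecke algebras of type $A$, and the $q=1$ specialization of that statement is precisely the Clifford-theoretic block decomposition you describe for $W_n = (\ZM/2)^n \rtimes \SG_n$ with $\ell \neq 2$. Your reduction via the central idempotents $e_r$ (which lift to $\OM W_n$ because $2$ is invertible in $\OM$), the Fong--Reynolds Morita equivalence $\FM W_n e_r \simeq \FM[\SG_r \times \SG_{n-r}]$ (no cocycle since the orbit character of $N$ extends to its stabilizer), and the tensor-factorization of decomposition numbers over a direct product are all correct. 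You correctly flag the one step that requires care, namely matching the labels $\SB^{\bfl}$ and $\DB^{\bfm}$ with external tensor products under the Morita functor; this is essentially definitional once one unwinds how Dipper--James construct $\SB^{\bfl}$ by induction from $W_r \times W_{n-r}$ of a twisted $S^{\lambda^{(1)}} \boxtimes S^{\lambda^{(2)}}$, and it does not require tracking the bilinear form through the Morita functor (though that also works) since a Morita equivalence defined over $\OM$ automatically preserves decomposition numbers and the parametrization by $\ell$-regular bipartitions is set up on both sides. In short: the approach is correct, and it is a direct reconstruction of (the $q=1$ case of) the argument the paper cites.
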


This is good motivation to introduce the following very coarse order relation.

\begin{definition}[Dipper--James order on bipartitions]
\label{def:DJ}
Let $\lamb=(\lambda
^{(1)},\lambda^{(2)})$ and ${\boldsymbol{\mu}}=(\mu^{(1)},\mu^{(2)})$ be two
bipartitions of $n$. We say that $\lamb\leq_{DJ}%
{\boldsymbol{\mu}}$ if, for $i\in\{1,2\}$, we have $|\lambda^{(i)}|=|\mu
^{(i)}|$ and $\lambda^{(i)}\leq\mu^{(i)}$. This induces a partial order on
$\Irr \KM W_{n}$ that we still denote by $\leq_{DJ}$.
\end{definition}

By the results for the symmetric group, the preceding proposition implies:

\begin{proposition}
\label{prop:DJbasicsetdatum} The pair $(\leq_{DJ},\gamma_{\ell})$ is a basic
set datum for $W_{n}$.
\end{proposition}

We will call it the Dipper-James basic set datum.

Since the two versions of the ordinary Springer correspondence are related by
tensoring with the sign character $\Sgn_{W}$, it is useful to recall how this
translates combinatorially. If $\lamb=(\lambda^{(1)}%
,\lambda^{(2)})$ is a bipartition, we set $\lamb%
^{*}:=({\lambda^{(2)}}^{*},{\lambda^{(1)}}^{*})$.

\begin{proposition}
[\cite{Geck-Pfeiffer}, Theorem 5.5.6]For $\lamb$ in
$\Bipart_{n}$, we have $\Sgn_{W_{n}}\otimes{\mathbf{S}}^{\boldsymbol{\lambda}%
}={\mathbf{S}}^{\lamb^{*}}$.
\end{proposition}

\begin{remark}
We have $\lamb\leq_{DJ}{\boldsymbol{\mu}}\Leftrightarrow
{\boldsymbol{\mu}}^{*}\leq_{DJ} \lamb^{*}$.
\end{remark}

\subsection{Type \texorpdfstring{$D_{n}$}{D}}
\label{subsec:DJ-D}

For $n \geq 2$, let $W'_{n}$ be the index two subgroup of $W_{n}$ consisting of signed permutations with an even number of sign flips. It is a Weyl group of type $D_{n}$.

A classification of the simple $\KM W^{\prime}_{n}$-modules is obtained as follows:

\begin{itemize}
\item if $(\lambda^{(1)}, \lambda^{(2)}) \in \Bipart_{n}$ satisfies $\lambda^{(1)}\neq\lambda^{(2)}$, then the $\KM W_{n}$-modules ${\mathbf{S}}^{(\lambda
^{(1)},\lambda^{(2)})}$ and ${\mathbf{S}}^{(\lambda^{(2)},\lambda^{(1)})}$
have the same restriction to $W^{\prime}_{n}$, a simple
$\KM W^{\prime}_{n}$-module which we will denote by ${\mathbf{S}}^{[\lambda^{(1)},\lambda^{(2)}]}$;

\item the restriction of ${\mathbf{S}}^{(\lambda,\lambda)}$ splits into a
direct sum of two non-isomorphic simple $\KM W^{\prime}_{n}$-modules
which are denoted by ${\mathbf{S}}^{[\lambda,+]}$ and ${\mathbf{S}}%
^{[\lambda,-]}$ (this case arises only for even $n$).
\end{itemize}

Moreover, every simple $\KM W^{\prime}_{n}$-module arises exactly
once in this way. Note that we use the notation $(a,b)$ for an ordered pair,
and $[a,b]$ for an unordered pair. Let $\Unbipart_{n}$ denote the set of
unordered bipartitions of $n$. Thus $\Irr \KM W^{\prime}_{n}$ is parametrized by
\[
\Dpart_{n} =
\begin{cases}
\Unbipart_{n} & \text{if $n$ is odd},\\
\{ [\lambda_{1},\lambda_{2}] \in\Unbipart_{n} \mid\lambda_{1} \neq\lambda_{2}
\} \cup\{ [\lambda,\pm] \mid\lambda\in\Part_{\frac n 2} \} & \text{if $n$ is
even}.
\end{cases}
\]
We denote by $\Unbipart_{n}^{(\ell)} \subset \Unbipart_{n}$ the subset of $\ell$-regular
unordered bipartitions (those where each partition is $\ell$-regular), and
similarly for $\Dpart_{n}^{(\ell)}$.

For any $E\in\Irr \KM W^{\prime}_{n}$ we denote by $a_{E}$ the
$a$-invariant of $E$ (see Definitions 6.5.7 and 9.4.8 in \cite{Geck-Pfeiffer}).
For any $F\in\Irr \FM W^{\prime}_{n}$, we define
\[
a_{F}:=\min\{a_{E} \mid E\in\Irr \KM W^{\prime}_{n}\text{ and
}d^{W^{\prime}_{n}}_{E,F}\neq0\}.
\]

\begin{proposition}
(Relation between $\Irr \FM W_{n}$ and $\Irr \FM W^{\prime
}_{n}$ \cite[\S 6]{Geck:extended}.)
\label{condainv}

\begin{enumerate}
\item \label{propa-invariant} There is a parametrization
\[
\Irr \FM W^{\prime}_{n} = \{{\mathbf{E}}^\lamb
\mid\lamb \in\Dpart_{n}^{(\ell)} \}
\]
such that, if $\gamma_{\ell}
:\Irr\FM W^{\prime}_{n}\rightarrow\Irr\KM W^{\prime}_{n}$ is induced by the natural 
inclusion $\Dpart_{n}^{(\ell)} \rightarrow\Dpart_{n}$, the following properties hold:

\begin{itemize}
\item for all $F\in\Irr\FM W^{\prime}_{n}$, we have $d^{W^{\prime
}_{n}}_{\gamma_{\ell}(F),F}=1$ and $a_{\gamma_{\ell}(F)} =
a_{F}$;

\item given $E\in\Irr \KM W^{\prime}_{n}$ and $F\in\Irr \FM W^{\prime}_{n}$, we have
\[
d^{W^{\prime}_{n}}_{E,F} \neq 0 \Rightarrow a_{F} < a_{E} \text{ or } E = \gamma_{\ell}
(F).
\]
\end{itemize}

\item Moreover, we have the following relations between the simple modules for
$\FM W_{n}$ and for $\FM W^{\prime}_{n}$:

\begin{itemize}
\item if $\lambda^{(1)}\neq\lambda^{(2)}$, then ${\mathbf{D}}^{(\lambda
^{(1)},\lambda^{(2)})}$ and ${\mathbf{D}}^{(\lambda^{(2)},\lambda^{(1)})}$
have the same restriction to $W^{\prime}_{n}$ and this restriction is the
simple $\FM W^{\prime}_{n}$-module ${\mathbf{E}}^{[\lambda
^{(1)},\lambda^{(2)}]}$;

\item the restriction of ${\mathbf{D}}^{(\lambda,\lambda)}$ splits into a
direct sum of two non-isomorphic simple $\FM W^{\prime}_{n}$-modules
which are ${\mathbf{E}}^{[\lambda,+]}$ and ${\mathbf{E}}^{[\lambda,-]}$.
\end{itemize}
\end{enumerate}
\end{proposition}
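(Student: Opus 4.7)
The plan is to derive the proposition from Clifford theory for the normal index-$2$ inclusion $W'_n \triangleleft W_n$, applied to the Dipper--James parametrization of $\Irr\FM W_n$ recalled in Section~\ref{subsubsection:DJtypeB}. The hypothesis $\ell \neq 2$ is essential: it makes the index invertible in $\FM$ and yields the standard Clifford dichotomy. Let $\chi : W_n \to \{\pm 1\}$ be the non-trivial linear character trivial on $W'_n$. Then each simple $\FM W_n$-module $V$ falls into one of two cases: (i) $V \otimes \chi \not\cong V$ and $V|_{W'_n}$ is simple, the restrictions of $V$ and $V \otimes \chi$ coinciding; or (ii) $V \otimes \chi \cong V$ and $V|_{W'_n}$ is the direct sum of two non-isomorphic simple modules. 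Every simple $\FM W'_n$-module arises in this way exactly once.

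To prove part (2), the key combinatorial step is to show that tensoring with $\chi$ swaps the two components of a bipartition label, i.e.\ $\DB^{(\lambda^{(1)},\lambda^{(2)})} \otimes \chi \cong \DB^{(\lambda^{(2)},\lambda^{(1)})}$. I would verify this at the ordinary level using the inductive construction $\SB^{(\lambda^{(1)},\lambda^{(2)})} = \mathrm{Ind}_{W_r \times W_{n-r}}^{W_n}(\SB^{(\lambda^{(1)},\emptyset)} \boxtimes \SB^{(\emptyset,\lambda^{(2)})})$ with $r = |\lambda^{(1)}|$: the restriction of $\chi$ to each factor is again of the corresponding type, and tensoring with $\chi$ swaps $\SB^{(\mu,\emptyset)} \leftrightarrow \SB^{(\emptyset,\mu)}$; transitivity of induction through the swap of factors then gives $\SB^{(\lambda^{(1)},\lambda^{(2)})} \otimes \chi \cong \SB^{(\lambda^{(2)},\lambda^{(1)})}$. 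Since everything is defined over $\ZM$ and the natural bilinear form on Specht modules is $\chi$-compatible, the same identity descends to $\DB$-modules after reduction mod $\ell$. Combined with the Clifford dichotomy, $\DB^{(\lambda^{(1)},\lambda^{(2)})}|_{W'_n}$ is simple iff $\lambda^{(1)} \neq \lambda^{(2)}$, and then named $\EB^{[\lambda^{(1)},\lambda^{(2)}]}$; it splits into two non-isomorphic simples $\EB^{[\lambda,\pm]}$ iff $\lambda^{(1)} = \lambda^{(2)} = \lambda$ (only possible for even $n$). This simultaneously establishes part (2) and produces the parametrization $\Irr\FM W'_n = \{\EB^\bfl \mid \bfl \in \Dpart^{(\ell)}_n\}$.

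For part (1), the injection $\gamma'_\ell$ is forced by the combinatorial inclusion $\Dpart^{(\ell)}_n \injto \Dpart_n$ together with the analogous description of $\Irr\KM W'_n$. The unitriangularity $d^{W'_n}_{\gamma'_\ell(F),F} = 1$ follows by restricting the Dipper--James identity $d^{W_n}_{\SB^\bfl,\DB^\bfl} = 1$ to $W'_n$ and using part (2) to read off composition multiplicities (in the split case, one checks separately that $\EB^{[\lambda,\pm]}$ each appear exactly once in $\SB^{[\lambda,\pm]}|_{W'_n}$). The $a$-invariant compatibility uses that tensoring with a linear character preserves $a$-invariants, so the $a$-function descends consistently from $W_n$ to $W'_n$; the triangular implication $d^{W'_n}_{E,F} \neq 0 \Rightarrow a_F < a_E$ or $E = \gamma'_\ell(F)$ then transfers from its $W_n$-analogue, which is a consequence of Proposition~\ref{prop:DJbasicsetdatum} together with the fact that the Dipper--James order refines the order by $a$-invariants.

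The main obstacle will be the second step: verifying that tensoring with $\chi$ swaps bipartition components at the modular level, especially in the splitting case $\lambda^{(1)} = \lambda^{(2)}$ where one must certify that the two summands $\EB^{[\lambda,\pm]}$ are genuinely non-isomorphic and that their $a$-invariants and decomposition patterns are correctly inherited from $\DB^{(\lambda,\lambda)}$. A secondary subtlety is confirming that the Dipper--James order refines the $a$-invariant order on $\Bipart_n$, which is needed to propagate the triangular implication to $W'_n$; this ultimately rests on the explicit formula for the $a$-function of bipartitions in the Geck--Pfeiffer framework.
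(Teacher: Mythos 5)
The paper does not prove this proposition; it is cited directly to \cite{Geck:extended}, so there is no internal argument to compare against. Your Clifford-theoretic plan — the index-$2$ inclusion $W'_n \triangleleft W_n$ together with the $\chi$-twist swapping bipartition components — is the right approach and matches how this result is established in the literature. Part~(2) is a correct deduction once the swap is verified modularly, and your justification of the swap (descent of the ordinary identity through the $\OM$-structure and the invariant form) is sound. The logical architecture of part~(1) is also sound: granting that $\bfm <_{DJ} \bfl$ implies $a_{\SB^\bfm} > a_{\SB^\bfl}$, the combination of Proposition~\ref{prop:DJbasicsetdatum} and the definition of $a_F$ as a minimum gives $a_{\gamma_\ell(F)} = a_F$ and the triangular implication for $W_n$, and the Grothendieck-group restriction lemma propagates them to $W'_n$.

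The problem is that the proposal defers the substantive content of part~(1) to assertions left unproved. The claimed refinement $\leq_{DJ}\subseteq\leq_a$ is a genuine combinatorial theorem about symbols — true (and, via the identification of $a$-invariants with Springer-fiber dimensions and the sign twist of Proposition~\ref{prop:sign}, essentially the content of this paper's Proposition~\ref{prop:compatibility}) but not elementary, and you label it a "secondary subtlety" without supplying an argument or a precise citation. Moreover, the blanket statement that "tensoring with a linear character preserves $a$-invariants" is false: tensoring by $\Sgn$ interchanges $a$- and $b$-invariants. What you need is the specific fact that $a_{\SB^{(\lambda^{(1)},\lambda^{(2)})}}$ is symmetric under $\lambda^{(1)}\leftrightarrow\lambda^{(2)}$ for the equal-parameter Weyl group, which is true but again not free. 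Finally, the degenerate case $[\lambda,\pm]$ — showing each summand occurs with multiplicity exactly one and inherits the correct $a$-invariant — is flagged as the main obstacle but not resolved. These three points carry the weight of part~(1); your outline is the right skeleton, but it is not yet a proof.
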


\begin{definition}
[Dipper--James order on unordered bipartitions]We define a partial order on the
set $\Unbipart_{n}$ of unordered bipartitions of $n$ by:
\[%
\begin{array}
[c]{c}%
[\lambda^{(1)},\lambda^{(2)}]\leq_{DJ}[\mu^{(1)},\mu^{(2)}]\\
\Updownarrow\\
(\lambda^{(1)},\lambda^{(2)})\leq_{DJ}(\mu^{(1)},\mu^{(2)})\text{ or }%
(\lambda^{(1)},\lambda^{(2)})\leq_{DJ}(\mu^{(2)},\mu^{(1)})
\end{array}
\]
(It is easy to check that this is indeed a partial order.)

We have a natural projection $\varphi: \Dpart_{n} \rightarrow\Unbipart_{n}$,
sending $[\lambda,\mu]$ to $[\lambda,\mu]$ and $[\lambda,\pm]$ to $[\lambda,\lambda]$. The order $\leq_{DJ}$ on
$\Unbipart_{n}$ induces an order that we still denote by $\leq_{DJ}$ on
$\Dpart_{n}$, hence on $\Irr \KM W^{\prime}_{n}$, such that
$\lamb \ < {\boldsymbol{\mu}}$ if and only if $\varphi
(\lamb) < \varphi({\boldsymbol{\mu}})$ for
$\lamb, {\boldsymbol{\mu}} \in\Dpart_{n}$. So the irreducible
modules ${\mathbf{S}}^{[\lambda,+]}$ and ${\mathbf{S}}^{[\lambda,-]}$ are not comparable.
\end{definition}

\begin{proposition}
\label{prop:DJbasicsetdatumtypeD} The pair $(\leq_{DJ},\gamma_{\ell}
)
$ is a basic set datum for $W^{\prime}_{n}$.
\end{proposition}

To prove the proposition, we will need the fact that the restriction from
$W_{n}$ to $W^{\prime}_{n}$ commutes with decomposition maps. Let us introduce
some notation. If $A$ is a finite dimensional algebra over a field, we denote
by $R_{0}(A)$ the Grothendieck group of the category of finite dimensional
$A$-modules. The class of an $A$-module $V$ in $R_{0}(A)$ is denoted by $[V]$.

\begin{lemma}
[\cite{Geck:extended}, lemma 5.2]The restriction of modules from
$\Irr \KM W_{n}$ to $\Irr \KM W^{\prime}_{n}$ (resp. from
$\Irr \FM W_{n}$ to $\Irr \FM W^{\prime}_{n}$) induces maps
between Grothendieck groups fitting into the following commutative diagram
\[
\begin{CD} R_0(\KM W_n) @>\Res>> R_0(\KM W'_n)\\ @VVdV @VVd'V\\ R_0(\FM W_n) @>\Res>> R_0(\FM W'_n) 
\end{CD}
\]
where $d$ and $d^{\prime}$ denote the decomposition maps.
\end{lemma}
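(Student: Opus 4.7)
The plan is to unfold the definitions of $d$ and $d'$ and to observe that the square commutes thanks to an elementary compatibility between restriction along $W'_n \hookrightarrow W_n$ and extension of scalars from $\OM$ to $\FM$; no Weyl group structure enters the argument, only the fact that $W'_n$ is a subgroup of $W_n$.

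First, since $R_0(\KM W_n)$ is generated as an abelian group by the classes $[E]$ with $E\in\Irr\KM W_n$, it suffices to check the identity $\Res(d([E])) = d'(\Res([E]))$ on each such generator. Fix $E$ and pick an $\OM W_n$-lattice $E_\OM$ with $\KM\otimes_\OM E_\OM \simeq E$. By construction $d([E]) = [\FM\otimes_\OM E_\OM]$, and this class is independent of the choice of $E_\OM$ (the classical fact recalled at the beginning of Section \ref{sec:basic set data}, following \cite{Serre}).

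Second, the key observation is that if one views $E_\OM$ as a module over the subring $\OM W'_n \subseteq \OM W_n$, then it is an $\OM W'_n$-lattice inside $\Res E$, and one has tautological identifications of modules
\[
\Res(\FM\otimes_\OM E_\OM) = \FM\otimes_\OM \Res E_\OM, \qquad \Res(\KM\otimes_\OM E_\OM) = \KM\otimes_\OM \Res E_\OM,
\]
since on each side the action of $w\in W'_n$ is simply the action of $w$ regarded as an element of $W_n$. In particular, $\Res E_\OM$ is an admissible integral form for $\Res E$, so by the independence of $d'$ from the choice of such a form, one has $d'([\Res E]) = [\FM\otimes_\OM \Res E_\OM]$ in $R_0(\FM W'_n)$.

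Combining these two facts gives
\[
\Res(d([E])) = [\Res(\FM\otimes_\OM E_\OM)] = [\FM\otimes_\OM \Res E_\OM] = d'([\Res E]) = d'(\Res([E])),
\]
which is the desired commutativity. There is essentially no obstacle in the argument: the only subtlety is invoking independence of the decomposition map on the chosen integral form in order to use $\Res E_\OM$ itself as the integral form on the $W'_n$ side; everything else is bookkeeping with the forgetful functor $\Res$ and the exact functors $\FM\otimes_\OM -$ and $\KM\otimes_\OM -$.
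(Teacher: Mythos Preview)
Your argument is correct: the commutation of restriction with the decomposition map is a general fact about a finite group and a subgroup, and the standard proof is exactly the one you give --- choose an $\OM$-lattice upstairs, observe that its restriction is an $\OM$-lattice downstairs, and use that $\Res$ commutes with $\FM\otimes_\OM -$ together with the independence of the decomposition map from the choice of lattice.

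As for comparison with the paper: there is nothing to compare. The lemma is stated with a citation to \cite{Geck:extended} and is not proved in the paper; it is used immediately afterwards as a black box in the proof of Proposition~\ref{prop:DJbasicsetdatumtypeD}. Your write-up supplies precisely the short verification one would expect.
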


\begin{proof}
[Proof of Proposition \ref{prop:DJbasicsetdatumtypeD}]By the lemma, for
$\lambda^{(1)}\neq\lambda^{(2)}$, we have
\[%
\begin{array}
[c]{rcl}%
d^{\prime}([{\mathbf{S}}^{[\lambda^{(1)},\lambda^{(2)}]}]) & = &
\Res(d([{\mathbf{S}}^{(\lambda^{(1)},\lambda^{(2)})}]))\\
& = & \Res([{\mathbf{D}}^{(\lambda^{(1)},\lambda^{(2)})}])+\Res( \text{larger
terms for}\leq_{DJ})\\
& = & [{\mathbf{E}}^{[\lambda^{(1)},\lambda^{(2)}]}]+ (\text{larger terms
for}\leq_{DJ})
\end{array}
\]
For $\lambda^{(1)}= \lambda^{(2)} = \lambda$, we have
\[
d^{\prime}([{\mathbf{S}}^{[\lambda,+]}]+ [{\mathbf{S}}^{[\lambda
,-]}])=[{\mathbf{E}}^{[\lambda,+]}]+ [{\mathbf{E}}^{[\lambda,-]}]+
(\text{larger terms for}\leq_{DJ})
\]

By Proposition \ref{condainv} \eqref{propa-invariant}, we get $d^{W^{\prime
}_{n}}_{{\mathbf{S}}^{[\lambda,+]},{\mathbf{E}}^{[\lambda,-]}}=0$ as
${\mathbf{S}}^{[\lambda,-]}$ and ${\mathbf{S}}^{[\lambda,+]}$ have the same
$a$-invariant.

This proves that $(\leq_{DJ}, \gamma_{\ell}
)$ is a basic set datum.
\end{proof}

We define the transposition $*$ on $\widetilde\Bipart_{n}$ by $[\lambda^{(1)}, \lambda^{(2)}]^{*} = 
[{\lambda^{(2)*}},{\lambda^{(1)*}}]$, 
and $[ \lambda, \pm]^{*} = [\lambda^{*}, \pm]$.

\begin{proposition}
[\cite{Geck-Pfeiffer}, Remark 5.6.5]
For $\lamb\in\widetilde\Bipart_{n}$, we have $\Sgn_{W'_n}\otimes{\mathbf{S}}^{\lamb} = {\mathbf{S}}^{\lamb^{*}}$.
\end{proposition}

\begin{remark}
For $\lamb,\mub \in \widetilde\Bipart_{n}$, we have
$\lamb \leq_{DJ} \mub \Leftrightarrow \mub^* \leq_{DJ} \lamb^{*}$.
\end{remark}

\section{Symbols and dimensions of Springer fibers}
\label{sec:symbols}

From now on, $G$ is a classical group defined over $\overline\FM_{p}$, with $p$ odd. In this section, we recall the bare 
minimum that we need from the combinatorics of the ordinary Springer correspondence for classical types, in order to prove 
our main theorem saying that the modular Springer correspondence can be deduced by restricting to $\ell$-regular 
bipartitions. Namely, we introduce symbols, and describe the map $\Phi^{G}_{\co}$ from bipartitions to symbols (which is very easy). To complete the description, one would then have to explain the correspondence between symbols and local systems on nilpotent orbits. However, we only need the combinatorial formula computing the dimension of the corresponding Springer fibre, directly from the symbol, in order to prove our main theorem. We will come back to the complete description in \S \ref{sec:comb}.

Since we only study the non-generalized case, the isogeny type is irrelevant to the determination of the correspondence, and 
we may assume that $G$ is a symplectic or special orthogonal group. We work with the Lie algebra ${\mathfrak{g}}$, but 
since $p$ is good for $G$, it gives the same results as for the group.

\subsection{Generalized symbols}

Following \cite{Lusztig-Spaltenstein:classical-Springer, Geck-Malle:support},
we will use generalized symbols. Let $n$, $r$ and $s$ be positive integers,
and $d\in\{0,1\}$. We denote by $X^{r,s}_{n,d}$ the set of ordered pairs
$\tbinom A B$ of finite sequences $A = (a_{1}, \dots, a_{m + d})$, $B =
(b_{1}, \dots, b_{m})$, such that:

\begin{enumerate}
\item $a_{i} - a_{i - 1} \geq r + s \text{ for } 1 < i \leq m + d$,

\item $b_{i} - b_{i - 1} \geq r + s \text{ for } 1 < i \leq m$,

\item $b_{1} \geq s$,

\item $\sum_{i = 1}^{m + d} a_{i} + \sum_{i = 1}^{m} b_{i} = n + rm(m + d - 1) + sm(m + d)$,
\end{enumerate}
taken modulo the shift operation
\[
\tbinom A B \mapsto\tbinom{0,\ a_{1} + r + s,\ \dots,\ a_{m + d} + r + s}{s,\ b_{1} + r + s,\ \dots,\ b_{m} + r + s }.
\]

If $s = d = 0$, we denote by $Y^{r}_{n}$ the quotient of $X^{r,0}_{n,0}$ by the operation of swapping the two rows of a 
symbol. The class of $\tbinom A B$ will be denoted $\genfrac{[}{]}{0pt}{1}{A}{B}$. Finally, let $\widetilde Y^{r}_{n}$ be the set 
similar to $Y^{r}_{n}$, but where each symbol with identical rows has a $\pm$ decoration.

Varying $r$ and $s$, these symbols are useful for parametrizing both irreducible representations of classical Weyl groups, 
and local systems on nilpotent orbits, for the different classical types. It is remarkable that all instances of the Springer 
correspondence (generalized case, bad prime $p = 2$\dots) for classical types can be expressed via generalized symbols. 
The integer $n$ will be the rank, and here the ``defect'' $d$ is limited to $\{0,1\}$ because we only consider the 
non-generalized 
case.

Note that the set $X^{r,s}_{0,d}$ consists of a single element,
\[
\Lambda^{r,s}_{0,d} =
\begin{cases}
\tbinom\emptyset\emptyset= \tbinom0 s = \tbinom{0\quad r + s}{s\quad r + 2s} =
\tbinom{0\quad r + s\quad2r + 2s}{s\quad r + 2s \quad2r + 3s} = \cdots &
\text{if } d = 0,\\[.5em]
\tbinom0 \emptyset= \tbinom{0 \quad r + s}{ s \quad} = \tbinom{0 \quad r + s
\quad2r + 2s}{s \quad r + 2s \quad} = \cdots & \text{if } d = 1.
\end{cases}
\]
Adding representatives of the same length componentwise defines an addition
\[
X^{r,s}_{n,d} \times X^{r^{\prime},s^{\prime}}_{n^{\prime},d}
\longrightarrow X^{r+r^{\prime},s+s^{\prime}}_{n+n^{\prime},d}.
\]
We define the \emph{reading} of a symbol $\Lambda= \tbinom A B$ by the
sequence $w(\Lambda) := (a_{1}, b_{1}, a_{2}, b_{2}, a_{3}\ldots)$ obtained by
alternating the entries of $A$ and $B$. A symbol is called
\emph{distinguished} if the entries of its reading weakly increase from left to right. 
Two symbols are \emph{similar} if the multisets of entries of suitable representatives coincide. Any symbol $\Lambda$ is similar to a (clearly unique) distinguished symbol $\dist(\Lambda)$: indeed, one can keep exchanging pairs of consecutive entries which are in the wrong order, until one reaches a distinguished symbol \cite[\S 2.A]{Geck-Malle:support}. Similar symbols will correspond to different local systems on the same nilpotent orbit.

\subsection{From bipartitions to symbols}
\label{subsec:bipart symb}

Recall that the irreducible characters of the Weyl group $W_{n}$ of type $B_{n}$ or $C_{n}$ are parametrized by the set
$\Bipart_{n}$ of bipartitions of $n$; the latter may be identified with $X^{0,0}_{n,1}$ by writing entries of each partition in 
weakly increasing order, and adding $0$ entries if necessary to get symbols of defect $1$. Similarly, for type $D_{n}$, we 
identify $\Irr \KM W'_{n}$ with $\widetilde\Bipart_{n} = \widetilde Y^{0}_{n}$ (keeping track of the decorations).
It will be convenient to let $\Bipart_W$ denote either $\Bipart_n$ if $W$ is of type $B_n/C_n$, or $\Dpart_n$ if $W$ is of type 
$D_n$. 

The pairs in $\PG_{G,\KM}^\Spr = \im\Phi^G_\KM$ can be parametrized by a set of symbols that we will denote by $\Symb_{G}^\Spr$ (see \S \ref{sec:comb}); it is $X_{n,1}^{1,1}$ for type $C_{n}$, $X_{n,1}^{2,0}$ for type $B_{n}$, and 
$\widetilde Y_{n}^{2}$ for type $D_{n}$.
Then $\Phi^{G}_{\KM} = \Psi^{G}_{\KM} \circ (\Sgn_W \otimes - )$ is deduced from the map 
$\Phi^G_\co 
: \Bipart_{W} \to \Symb_{G}^{\Spr}$ defined as follows:
\begin{align}
\text{Type }C_{n}: &  \qquad \Bipart_{n} = X_{n,1}^{0,0}\longrightarrow X_{n,1}^{1,1}, &
\tbinom{\alpha}{\beta} &  \longmapsto \tbinom{\alpha}{\beta} + \Lambda_{0,1}^{1,1}; \\
\text{Type }B_{n}: &  \qquad \Bipart_{n} = X_{n,1}^{0,0}\longrightarrow X_{n,1}^{2,0}, &
\tbinom{\alpha}{\beta} &  \longmapsto \tbinom{\alpha}{\beta} + \Lambda_{0,1}^{2,0}; \\
\text{Type }D_{n}: &  \qquad \widetilde \Bipart_{n} = \widetilde Y_{n}^{0}\longrightarrow \widetilde Y_{n}^{2}, &
\genfrac{[}{]}{0pt}{1}{\alpha}\beta & \longmapsto \genfrac{[}{]}{0pt}{1}{\alpha}{\beta} + \Lambda_{0,0}^{2,0}.
\end{align}
(keeping track of the decorations for degenerate bipartitions or symbols in type $D_{n}$ with $n$ even), so that characters 
sent to similar symbols have the same associated nilpotent orbit (but with different local systems).
Let us call $\Lambda_{G}$ the symbol that is added in the description of $\Phi^G_\co$. Note that $\Phi^G_\co$ describes 
$\Phi^{G}_{\KM}$ 
rather than $\Psi^{G}_{\KM}$, whereas the Springer basic set datum is defined with respect to the latter. But 
tensoring by the sign character amounts to transposing bipartitions, which just reverses the order on bipartitions.

\subsection{Dimensions of Springer fibres}
For $(\OC,\LC)\in \PG_{G}$, let us denote $\delta((\OC,\LC))=\dim \BC_{\OC}$. 
There is a combinatorially defined 
function $\delta_\co : \Symb_{G} \to \NM$ describing $\delta$ (see \cite[\S 2.E]{Geck-Malle:support}). Since this 
quantity depends only on the nilpotent orbit and not on the local system, the function $\delta_\co$ needs to be constant on 
similarity classes. For $\Lambda\in \Symb_G$, we have
\[
\delta_\co (\Lambda) = \sigma( w( \Lambda ) ) - \sigma( w (\Lambda_G ) ),
\text{ where } \sigma( s_{1},\dots, s_{2m+d} ) = \sum_{1\leq i < j \leq 2m + d} \min( s_i, s_j ).
\]
Of course one should take representatives of $\Lambda$ and $\Lambda_{G}$ of the same size: here $m$ is the length of the 
second line. We can be more explicit: for symbols in $\Symb_G^\Spr$ (i.e. $d=0,1$), we have
\[
\sigma( w (\Lambda_G ) ) =
\begin{cases}
m(4m^{2} - 1)/3 & \text{ in type $C_{n}$};\\
m(m - 1)(4m + 1)/3 & \text{ in type $B_{n}$};\\
m(m - 1)(4m - 5)/3 & \text{ in type $D_{n}$}.
\end{cases}
\]

\begin{definition}
Let $\leq_\delta$ be the order relation on $\Irr \KM W = \Bipart_{W}$ defined by
\[
\bfl <_{\delta} \bfm \Longleftrightarrow \delta\Big( \Psi_{\KM}(\bfl) \Big)<  \delta\Big( \Psi_{\KM}(\bfm) \Big)\Longleftrightarrow \delta_\co\Big( \Phi^G_\co(\bfl^*) \Big) < \delta_\co\Big( \Phi^G_\co(\bfm^*) \Big).
\]
\end{definition}
Note that we have used the fact that the two versions of the Springer correspondence differ by tensoring by the 
sign character, which amounts to transposing the bipartitions before applying the correspondence.

Let $\bfl,\bfm\in\Bipart_{W}$, and set $\Psi_{\KM}( \bfl ) = (\OC,\LC_{\rho})$ and $\Psi_{\KM}( \bfm ) = (\OC',\LC_{\rho'})$. 
Note that
\[
\bfl <_\triv^{\NC} \bfm \Longleftrightarrow \OC > \OC' \Longrightarrow
\dim \OC > \dim \OC' \Longleftrightarrow \dim \BC_{\OC} < \dim \BC_{\OC'} \Longleftrightarrow
\bfl <_{\delta} \bfm,
\]
where we have used the fact that $\dim \BC_{\OC}$ is equal to half 
the codimension of $\OC$ in $\NC$ \cite{Springer:Trig,Steinberg}.
Thus we have $\leq^{\NC}_{\triv} {}\subset{} \leq_{\delta}$; since $(\leq^{\NC}_{\triv},\beta_{\ell})$ is a basic set datum, 
we deduce that thus $(\leq_\delta, \beta_{\ell})$ is a basic set datum as well, by Proposition \ref{prop:comp}. Remarkably, this 
very limited part of the combinatorics is enough for the proof of our main theorem in the next section.

\section{Modular Springer correspondence for classical types}
\label{sec:mod}

\begin{proposition} \label{prop:DJ delta}
We have $\leq_{DJ} {}\subset{} \leq_{\delta}$.
\end{proposition}

\begin{proof}
Taking into account the fact that transposing the bipartitions reverses the Dipper--James order on $\Bipart_{W}$, 
we need to show that for $\bfl$ and $\bfm$ in $\Bipart_W$, we have
\[
\bfl <_{DJ} \bfm \Longrightarrow \delta_\co \Big( \Phi_\co^G(\bfm) \Big) < \delta_\co \Big( \Phi^G_\co(\bfl) \Big).
\]

The proof is similar for symplectic and special orthogonal groups. We give the proof for $G=\Sp_{2n}$.
Moreover, it is sufficient to prove the result when $\bfm$ covers $\bfl$, in which case we may assume for example that 
$\lambda^{(2)} 
= \mu^{(2)}$, while $\lambda^{(1)}$ is obtained from $\mu^{(1)}$ by increasing one part by $1$ and decreasing 
another part by $1$ (the case where  $\lambda^{(1)} = \mu^{(1)}$ is similar). Then the symbols $\Phi^G_\co(\bfl)$ and 
$\Phi^G_\co(\bfm)$ 
have the same relationship: their second lines are identical, while the first line of $\Phi^G_\co(\bfl)$ is 
obtained from the first line of $\Phi^G_\co(\bfm)$ by increasing one part by $1$ and decreasing another part by $1$.

Let us reorder the entries of $\Phi^G_\co(\bfl)$ and $\Phi^G_\co(\bfm)$ into the respective sequences $x_{1}\leq x_{2} \leq 
\dots \leq x_{2m + 1}$ and $y_{1} \leq y_{2} \leq \dots \leq  y_{2m + 1}$. Then there exist indices $a < b$ such that $y_{b} = x_{b} + 
1$ and $y_{a} = x_{a} - 1$, while $y_{i} = x_{i}$ for all $i \neq a, b$. We get:

\[
\begin{array}{lll}
\delta_\co \Big( \Phi^G_\co(\bfm) \Big) - \delta_\co \Big( \Phi^G_\co(\bfl) \Big)
&=&\sum\limits_{1\leq i<j\leq 2m + 1} \min(y_i,y_j)- \sum\limits_{0\leq i<j\leq 2m} \min(x_i,x_j)\\[1em]
&=&\sum\limits_{1 \leq i \leq 2m + 1}(2m-i)(y_i-x_i)\\[1em]
&=& (2m-a)(y_{a}-x_{a})+(2m-b)(y_{b}-x_{b})\\
&=& a - b < 0.
\end{array}
\]
\end{proof}

\begin{corollary}
The inclusion $\beta_\ell : \Irr \FM W \hookrightarrow \Irr \KM W$, which was introduced in \S \ref{subsec:good}, coincides 
with the inclusion $\gamma_{\ell}$ defined in \S \ref{subsec:DJ-BC} and \S \ref{subsec:DJ-D}.
\end{corollary}

\begin{proof}
Recall that $(\leq_{DJ}, \gamma_{\ell})$ is a basic set datum by Proposition 
\ref{prop:DJbasicsetdatum}. By Propositions \ref{prop:DJ delta} and \ref{prop:comp}, $(\leq_{\delta}, \gamma_{\ell})$ is a 
basic set datum as well. We have seen at the end of \S \ref{sec:symbols} that $(\leq_\delta, \beta_{\ell})$ is also a basic 
set datum. It follows from Corollary \ref{cor:unique beta} that $\beta_{\ell} = \gamma_{\ell}$.
\end{proof}

Reformulating, we get the main result of the paper.

\begin{theorem}
Let $p$ be an odd prime number, and let $G$ be a classical group over $\ov\FM_{p}$, with Weyl group $W$. For an odd 
prime $\ell\neq p$, the modular Springer correspondence for $G$ is described as follows: for $\lamb\in\Bipart_{W}^{(\ell)}$,
\[
\Psi^{G}_\FM(\DB^\lamb) = \Psi^{G}_{\KM}(\SB^\lamb) = \Phi^{G}_{\KM}(\SB^{\lamb^*}).
\]
\end{theorem}
For the statement to make sense, recall that we may identify $\PG_{\KM}$ with $\PG_{\FM}$ because $\ell \neq 2$.

\section{Compatibility between Dipper--James and Springer orders}
\label{sec:comb}

\subsection{Classical nilpotent orbits}

For $G$ a classical group with natural representation $V$, with $p$ good, a nilpotent orbit is determined by the Jordan type of its elements acting on $V$, with the exception of the so-called ``very even'' case in type $D_{n}$. More precisely, we have a set of partitions $\PC(G)$ (with decorations for the very even case), described below according to the type, such that the nilpotent orbits can be naturally called $\OC^G_\lambda$, $\lambda\in\PC(G)$:

\begin{enumerate}
\item $\mathcal{P}(C_{n})$ is the set of partitions of $2n$ in which each odd part appears with an even multiplicity (we 
ensure that the total number of parts is odd by adding a $0$ entry if necessary: this will be useful later);

\item $\mathcal{P}(B_{n})$ is the set of partitions of $2n+1$ in which each even part appears with an even multiplicity (the 
total number of parts is necessarily odd);

\item $\mathcal{P}(D_{n})$ is the set of partitions of $2n$ in which each even part appears with an even multiplicity, except 
that a partition with no odd part is decorated with a $\pm$ sign (the total number of parts is necessarily even).
\end{enumerate}

\subsection{From nilpotent orbits to bipartitions and symbols}

Let $\lambda\in\PC(G)$. Partition the sequence $\lambda=(\lambda
_{1}\leq\ldots\leq\lambda_{t})$ into blocks of lengths $1$ or $2$ such that if $\lambda$ is symplectic (resp. orthogonal) then all even (resp. odd) $\lambda_{i}$ lie in a block of length $1$, and all odd (resp. even) $\lambda_{i}$ lie in a block of length $2$. By \cite{Geck-Malle:support}, the symbol associated to $\lambda$ (i.e.~corresponding to the pair $(\OC_\lambda,\EM)$) is the unique distinguished symbol with reading $(c_{1},\dots,c_{t})$ given by
\[
\begin{cases}
c_{i} = \left\lfloor \lambda_{i} / 2 \right\rfloor + i - 1 & 
\text{if} \{\lambda_{i}\}\text{ is a block},\\
c_{i} = c_{i+1} = \left\lceil \lambda_{i} / 2 \right\rceil + i - 1 & 
\text{if }\{\lambda_{i},\lambda_{i+1}\}\text{ is a block}.
\end{cases}
\]
One obtains the corresponding bipartition (written as a symbol) by subtracting $\Lambda_{G}$. It has reading $(a_{1},\dots,a_{t})$, where in the symplectic case:
\[
\begin{cases}
\qquad\quad a_{i} = \lambda_{i} / 2 &
\text{if $\{\lambda_{i}\}$ is a block, }\\
(a_{i},a_{i+1}) = \left(  \left\lceil \lambda_{i} / 2 \right\rceil, \left\lfloor \lambda_{i} / 2 \right\rfloor \right) &
\text{if $\{\lambda_{i},\lambda_{i+1}\}$ is a block; }
\end{cases}
\]
and in the special orthogonal case:
\[
\begin{cases}
\qquad\quad a_{i} =  ( \lambda_{i} + (-1)^i ) / 2 & 
\text{if $\{\lambda_{i}\}$ is a block},
\\
( a_{i}, a_{i+1} )=\left(  \lambda_{i} / 2,  \lambda_{i} / 2 \right)   &
\text{if $\{\lambda_{i},\lambda_{i+1}\}$ is a block with $i$ odd},
\\
(a_{i},a_{i+1}) = \left(  \lambda_{i} / 2 + 1,  \lambda_{i} / 2 - 1 \right) &
\text{if $\{\lambda_{i},\lambda_{i+1}\}$ is a block with $i$ even.}
\end{cases}
\]
The resulting bipartition is actually the $2$-quotient of $\lambda$, as
defined in \cite{Mac}. Here we have explicit formulas because $\lambda
\in\mathcal{P}(B_{n})\cup\mathcal{P}(C_{n})\cup\mathcal{P}(D_{n})$ has a
special form.

In the following examples, we give a partition corresponding to a nilpotent orbit, then the corresponding bipartition, then the corresponding symbol. We highlight the nontrivial blocks. We have found it convenient to display the partition on two lines as well, in the same shape as the corresponding symbol.

\begin{example}
\label{exam_2quo}
Type $C_{28}$, $\lambda = (9^{2},7^{2},6^{2},4,3^{2},2)$. 
\[
\binom
	{0\quad \mathbf 3\quad  4\quad  6\quad  \mathbf 7\quad  \mathbf 9}
	{2\quad  \mathbf 3\quad  6\quad \mathbf 7\quad  \mathbf 9}
\longmapsto
\binom
	{0\quad \mathbf 2\quad 2\quad 3\quad \mathbf 3\quad \mathbf 4}
	{1\quad \mathbf 1\quad 3\quad \mathbf 4\quad \mathbf 5}
\longmapsto
\binom
	{0\quad \mathbf 4\quad 6\quad 9\quad \mathbf {11}\quad \mathbf {14}}
	{2\quad \mathbf 4\quad 8\quad \mathbf {11}\quad \mathbf {14}}
\]
\end{example}

\begin{example}
Type $B_{25}$, $\lambda= (8^{2}, 7, 6^{2}, 4^{2}, 3, 2^{2}, 1 )$.
\[
\binom
	{1\quad \mathbf 2\quad \mathbf 4\quad \mathbf 6\quad 7\quad \mathbf 8}
	{\mathbf 2\quad 3\quad \mathbf4\quad \mathbf 6\quad \mathbf 8}
\longmapsto
\binom
	{0\quad \mathbf 0\quad \mathbf 2\quad \mathbf 3\quad 3\quad \mathbf 3}
	{\mathbf 2\quad 2\quad \mathbf 2\quad \mathbf 3\quad \mathbf 5}
\longmapsto
\binom
	{0\quad \mathbf 2\quad \mathbf 6\quad \mathbf 9\quad 11\quad \mathbf {13}}
	{\mathbf 3\quad 5\quad \mathbf 7\quad \mathbf {10}\quad \mathbf {14}}
\]
\end{example}

\begin{example}
Type $D_{25}$, $\lambda = ( 8^{2}, 7, 6^{2}, 4^{2}, 3, 2^{2} )$.
\[
\begin{bmatrix}
	\mathbf 2& 3& \mathbf 4& \mathbf 6& \mathbf 8\\
	\mathbf 2& \mathbf 4& \mathbf 6& 7& \mathbf 8
\end{bmatrix}
\longmapsto
\begin{bmatrix}
	\mathbf 1& 1& \mathbf 1& \mathbf 2& \mathbf 4\\
	\mathbf 1& \mathbf 3& \mathbf 4& 4& \mathbf 4
\end{bmatrix}
\longmapsto
\begin{bmatrix}
	\mathbf 1& 3& \mathbf 5& \mathbf 8& \mathbf {12}\\
	\mathbf 1& \mathbf 5& \mathbf 8& 10& \mathbf {12}
\end{bmatrix}
\]
\end{example}

We will not need the precise formulation of the correspondence for nontrivial local systems: the only thing we need to know is 
that local systems on the same orbit have similar symbols. So we will only give a summary of the description: a local system 
is given by a sequence of signs; starting from the symbol of the trivial local system on the same orbit, one defines some 
``intervals" inside the symbol, and one swaps the two lines of some of the intervals, according to those signs (see 
\cite{Lusztig:ICC}\footnote{Actually there is a mistake in the description of the generalized correspondence in type $C$ in 
\cite[Theorem 12.3]{Lusztig:ICC}, which was corrected in \cite{Shoji:unip} (see his Remark 5.8). As pointed out in the final 
remark of \cite[\S 2.B]{Geck-Malle:support}, this does not affect the description of the nongeneralized Springer 
correspondence.}). This may change the defect of the symbol. Only those with defect $1$ are in the image of the 
nongeneralized Springer correspondence in types $B_{n}$ or $C_{n}$,\footnote{Actually for type $B_n$, the ``right'' notion is that of symbol of defect $\pm 1$ up to swapping the two lines, but it does not really matter for us since we don't need to describe precisely the local systems in the correspondence.} and only those with defect $0$ in type $D_{n}$.

\subsection{From bipartitions and symbols to nilpotent orbits}
\label{sec:symbol->partition}

\label{conv}

Consider a bipartition $\bfl\in\Bipart_W$ corresponding to a distinguished symbol $\Lambda=\binom S T$ or $\tsqbinom S T$. Write $S = (s_1, \dots, s_{m+d})$ and $T = (t_1, \dots, t_m)$. Let us describe how to recover the partition $\lambda\in\PC(G)$ of the corresponding nilpotent orbit. Let $x = w(\Lambda)$ be the reading of $\Lambda$. Then
\[
\lambda = 2\Big(\tilde x - w(\Lambda_G)\Big),
\]
where $\tilde x$ is a modification of $x$, which we will now describe according to the type, and $\Lambda_G$ was defined in \S \ref{subsec:bipart symb}. 

\medskip

Type $C_n$. A \emph{distinguished} pair in $\Lambda$ is a pair $(s_{i},t_{i})$ with $s_{i}=t_{i}$ or a pair $(t_{i},s_{i+1})$ 
with $t_{i}=s_{i+1}$. Since the rows of the symbols strictly increase, the distinguished pairs are pairwise disjoint. Then $\tilde x$ is the sequence obtained from $x$ by changing each distinguished pair $(s_{i},s_{i})$ into
$(s_{i}-1/2,s_{i}+1/2)$.

\smallskip

Type $B_{n}$ or $D_{n}$. A \emph{distinguished} pair is a pair
$(t_{i},s_{i+1})$ such that $t_{i}=s_{i+1}$. A \emph{frozen} pair is a pair $(s_{i},t_{i})$ with $s_{i}=t_{i}$. Since the rows of the symbols strictly increase, distinguished and frozen pairs are disjoint. Then $\tilde x$ is the sequence obtained from $x$ by changing each $s_{i}$ into $s_{i}+1/2$ and each $t_{i}$ into $t_{i}-1/2$ provided they do not appear in a distinguished or frozen pair, and each distinguished pair $(t_{i},t_{i})$ into $(t_{i}-1,t_{i}+1)$, 

\medskip

One sees easily this procedure is an inverse to that of the preceding section, hence we have a bijection between distinguished symbols and nilpotent orbits, which was denoted by $\Pi^G_1$ in the diagram of \S \ref{subsec:diag}.

If we start with a non-distinguished symbol $\Lambda$, we first have to reorder its entries to get a distinguished symbol $\dist(\Lambda)$, and then apply the procedure above to get the corresponding nilpotent orbit. The local system is encoded in what happens during that sorting, but we do not need a precise description to obtain the main result of this section: local systems are not comparable for our Springer order $\leq^\NC_\triv$ anyway (this is possible only because $\ell \neq 2$).

\subsection{Compatibility between Dipper--James and Springer orders}

Recall the Dipper--James order $\leq_{DJ}$ from Definition \ref{def:DJ}, and the Springer order (for the good characteristic case) $\leq^\NC_\triv$ defined in \S \ref{subsec:good}.
We will also need the dominance order on compositions: for $x\in \NM^t$, we let $S_i(x) = \sum_{i \leq k \leq t} x_k$; for $x,y\in \NM^t$, we write $x\leq y$ if and only if $\ S_i(x) \leq S_i(y)$ for all $i$.

\begin{proposition}
\label{prop_Cn}
We have $\leq_{DJ} {}\subset{} \leq^\NC_\triv$.
\end{proposition}

\begin{proof}

Let $\lamb,\mub\in\Irr\KM W = \Bipart_W$, and let $\lambda,\mu\in\PC(G)$ be the partitions of the corresponding nilpotent  orbits (we may ignore decorations for very even orbits). We want to show
\[
\bfl <_{DJ} \bfm \Longrightarrow \lambda < \mu.
\]

We write $\lambda^{(i)}$ and $\mu^{(i)}$ for the components of $\bfl$ and $\bfm$. In the $G = \SO_{2n}$ case, by definition of the Dipper--James order on unordered bipartitions, we can assume (up to flipping $\bfl$) that $(\lambda^{(1)},\lambda^{(2)}) \leq_{DJ} (\mu^{(1)},\mu^{(2)})$ (in $\Bipart_n$ rather than $\widetilde\Bipart_n$).

As in the proof of Proposition \ref{prop:DJ delta}, we may assume that $\lambda^{(2)} = \mu^{(2)}$, and that 
$\lambda^{(1)}$ is obtained from $\mu^{(1)}$ by increasing one part by $1$ and decreasing another part by $1$. Again, if we
reorder the entries of the symbols $\Phi^G_\co(\bfl)$ and $\Phi^G_\co(\bfm)$ into the respective sequences $x = ( x_{1}\leq x_{2} \leq \dots \leq x_{t} )$ and $y = ( y_{1} \leq y_{2} \leq \dots \leq  y_{t} )$, i.e. $x = w(\dist \Phi^G_\co(\bfl) )$ and $y = w( \dist \Phi^G_\co(\bfm) )$, then there exist indices $a < b$ such that $y_{b} = x_{b} + 1$ and $y_{a} = x_{a} - 1$, while $y_{i} = x_{i}$ for all $i \neq a, b$. This implies that for $1\leq i \leq t$, we have
\[
S_{i}(y) - S_{i}(x) = \ind_{]a,b]}(i).
\]

Let $\tilde x = ( \tilde x_{1}\leq \tilde x_{2} \leq \dots \leq \tilde x_t )$ and $\tilde y = ( \tilde y_{1} \leq \tilde y_{2} \leq \dots \leq  \tilde y_t )$ be the sequences obtained from $x$ and $y$ as in \S \ref{conv}, so that $\lambda = 2( \tilde x - w( \Lambda_{G} ) )$ and $\mu = 2( \tilde y - w( \Lambda_{G} ) )$. Hence to prove that $\lambda < \mu$, it is enough to prove that $\tilde x < \tilde y$. Moreover, proving $\tilde x \leq \tilde y$ is enough: we know that that $\lambda$ and $\mu$ cannot be equal, since by \S \ref{sec:mod}, $\dim\OC^G_\lambda < \dim\OC^G_\mu$.

First consider the case $G = \Sp_{2n}$. Recall that $\tilde x$ and $\tilde y$ are obtained from $x$ and $y$ by replacing pairs $( s, s )$ of equal consecutive elements by $( s - 1/2, s + 1/2 )$. Then for all $1\leq i \leq t$,
\[
S_{i}(\tilde x) = S_i(x) + \frac 1 2 \delta_{x_{i-1}, x_{i}}
\quad \text{ and }\quad
S_{i}(\tilde y) = S_i(y) + \frac 1 2 \delta_{y_{i-1}, y_{i}}.	
\]
We get
\[
S_{i}(\tilde y) - S_{i}(\tilde x)
= S_{i}(y) - S_{i}(x) + \frac 1 2 ( \delta_{y_{i-1}, y_{i}} - \delta_{x_{i-1}, x_{i}} )
= \ind_{]a,b]}(i) + \frac 1 2 ( \delta_{y_{i-1}, y_{i}} - \delta_{x_{i-1}, x_{i}} ).
\]

Suppose $S_{i}(\tilde y) - S_{i}(\tilde x) < 0$ for some $i$. Then we must have $i \notin ]a,b]$, $y_{i-1}\neq y_{i}$, and $x_{i-1} = x_{i}$, which can only happen if $i = a$ and $y_{a-1} = y_{a} + 1$, or $i = b + 1$ and $y_{b} - 1 = y_{b+1}$. In both cases, we derive a contradiction for the entries of $y$ weakly increase from left to right. Since $S_{i}(\tilde x ) \leq S_{i}(\tilde y)$ for any $1\leq i \leq t$, we conclude that $\tilde x \leq \tilde y$ and we are done.

\medskip

Now assume $G = {\mathbf{SO}}_{2n+1}$. Then $t = 2m + 1$. According to
\S \ref{conv}, we can form disjoint pairs $(y_{i-1},y_{i})$ of
consecutive letters in $y$ such that $y_{i-1} = y_{i}.\ $For such a
pair we have
\[
( \tilde y_{i-1}, \tilde y_{i} ) =
\begin{cases}
( y_{i - 1}, y_{i}) & 
	\text{ if } y_{i - 1} = y_{i} \text{ and } i \text{ is even,}\\
(y_{i-1}-1,y_{i}+1) &
	\text{ if } y_{i - 1} = y_{i} \text{ and } i \text{ is odd.}
\end{cases}
\]

Let $K$ be the set of indices $i$ such that $y_{i}$ belongs to one of these pairs. Now for any $i\notin K$
\[
\tilde y_{i}=\left\{
\begin{array}[c]{l}
y_{i} - 1/2 \text{ if }i\text{ is even,} \\
y_{i} + 1/2 \text{ if }i\text{ is odd.}
\end{array}
\right.
\]
Consider $i\in \{ 1, \ldots, 2m + 1 \}$. When $i$ is even, $( y_{i - 1}, y_{i} )$ is not a distinguished pair. Moreover, the set $U_{i}=\{j\in\{i,\ldots,2m\}\mid j\notin K\}$ contains the same number of even and odd indices. Thus $S_{i}(\tilde{y})=S_{i}(y).$
When $i$ is odd, by computing the number of odd and even indices in $U_{i}$ in both cases $(y_{i-1},y_{i})$ distinguished or not, we obtain
\[
S_{i}(\tilde{y})=S_{i}(y)+\frac{1}{2}\delta_{y_{i-1},y_{i}}+\frac{1}{2}.
\]
We have the same relation between $S_{i}(\tilde{x})$ and
$S_{i}(x)$. Moreover $S_{i}(x)=S_{i}(y
)-\boldsymbol{1}_{]a,b]}(i)$. This gives
\[
S_{i}(\tilde{x})=\left\{
\begin{array}[c]{l}
S_{i}(\tilde{y})-\boldsymbol{1}_{]a,b]}(i)\text{ if }i\text{ is even,}\\
S_{i}(\tilde{y})-\boldsymbol{1}_{]a,b]}(i)+\frac{1}{2}(\delta_{x
_{i-1},x_{i}}-\delta_{y_{i-1},y_{i}})\text{ if
}i\text{ is odd.}
\end{array}
\right.
\]
When $i$ is even, we have clearly $S_{i}(\tilde{x})\leq
S_{i}(\tilde{y}).$ This is also true for $i$ odd by using the same
arguments as for $\Sp_{2n}$.

\medskip

For $G = \SO_{2n}$, we proceed as in the case $G = {\mathbf{SO}}_{2n+1}$, switching the cases $i$ even and $i$ odd due to the fact that $t=2m$ is even. 
\end{proof}

\end{document}